\renewcommand{\(}{\left(}
\renewcommand{\)}{\right)}
\newcommand{\spmod}[1]{\ensuremath{\,(#1)}}
\renewcommand{\|}{\big |}
\def\Z{\mathbb{Z}}
\def\Q{\mathbb{Q}}
\def\H{\mathbb{H}}
\def\C{\mathbb{C}}
\def\F{\mathbb{F}}
\def\SL{{\rm SL}}
\def\GL{{\rm GL}}
\newcommand{\pfrac}[2]{\left(\frac{#1}{#2}\right)}
\newcommand{\ptfrac}[2]{\left(\tfrac{#1}{#2}\right)}
\newcommand{\pMatrix}[4]{\left(\begin{matrix}#1 & #2 \\ #3 & #4\end{matrix}\right)}
\renewcommand{\pmatrix}[4]{\left(\begin{smallmatrix}#1 & #2 \\ #3 & #4\end{smallmatrix}\right)}
\renewcommand{\bar}[1]{\overline{#1}}
\renewcommand{\sl}{\big| }
\DeclareMathOperator{\new}{new}
\def\ep{\varepsilon}
\newtheorem{theorem}{Theorem}[section]
\newtheorem{lemma}[theorem]{Lemma}
\newtheorem{proposition}[theorem]{Proposition}
\theoremstyle{remark}
\newtheorem*{remark}{Remark}
\newtheorem*{example}{Example}
\newtheorem*{definition}{Definition}
\numberwithin{equation}{section}
\newcommand{\Gal}{\operatorname{Gal}}
\def\cpm{c\phi_m}
\def\cp13{c\phi_{13}}
\def\bl{\bar\ell}
\title{Congruences like Atkin's for generalized Frobenius partitions}
\date{\today}
\date{\today}
\author{Scott Ahlgren}
\address{Department of Mathematics\\
University of Illinois\\
Urbana, IL 61801} 
\email{sahlgren@illinois.edu} 
 \author{Nickolas Andersen}
 \address{Department of Mathematics \\ 
 Brigham Young University, Provo, UT 84602}
 \email{nick@math.byu.edu}
\author{Robert Dicks}
\address{Department of Mathematics\\
Clemson University\\
Clemson, SC 29634} 
\email{rdicks@clemson.edu}  
\thanks{The first author was  supported by a grant from the Simons Foundation (\#963004 to Scott Ahlgren).
The second author was  supported by a grant from the Simons Foundation (\#854098 to Nickolas Andersen)}
\begin{document}

\begin{abstract}

In the 1960s Atkin discovered congruences modulo primes $\ell
\leq 31$ for the partition function $p(n)$ in arithmetic progressions modulo $\ell Q^3$, where $Q\neq \ell$ is prime.
Recent work of the first author with Allen and Tang shows that such congruences exist for all primes $\ell\geq 5$.

Here we consider (for primes $m\geq 5$) the 
$m$-colored generalized Frobenius partition functions $c\phi_m(n)$; these are  natural level $m$ analogues of $p(n)$.  
For each such $m$ 
we prove that there are similar congruences for $\cpm(n)\pmod \ell$ for all primes $\ell$ outside of an explicit finite set depending on $m$.
To prove the result we first construct, using  both theoretical and computational methods, cusp forms of half-integral weight on $\Gamma_0(m)$ which capture the relevant values of $c\phi_m(n)$ modulo~$\ell$.
We then apply previous work of the authors on the Shimura lift for modular forms with the eta multiplier
together with tools from 
the theory of modular Galois representations.
\end{abstract}

\maketitle

\setcounter{tocdepth}{2}

\section{Introduction and statement of results}

Since their discovery over a century ago, the Ramanujan congruences
\begin{equation} \label{eq:ram-cong}
    p\pfrac{\ell n+1}{24} \equiv 0 \pmod{\ell}, \qquad \ell=5,7,11,
\end{equation}
for the partition function $p(n)$ have been the catalyst for a rich theory of congruences for combinatorial sequences coming from modular forms or related objects.
(We agree that $p(n) = 0$ if $n\notin \Z_{\geq 0}$.)
Ramanujan, Watson, and Atkin \cite{Atkin1, Ramanujan, Ramanujan2, Watson} extended \eqref{eq:ram-cong} to arbitrary powers of $5$, $7$, and $11$.

For primes $\ell \leq 31$, Atkin \cite[eq.~$(52)$]{Atkin2} found examples of congruences of the form
\begin{equation} \label{eq:atkin-cong}
    p\pfrac{\ell Q^2 n+\beta}{24} \equiv 0 \pmod{\ell} \quad \text{ if } \ \ \pfrac nQ = \ep_Q,
\end{equation}
where $Q$ is a prime different from $\ell$ and $\ep_Q \in \{\pm 1\}$.
This leads to many congruences of the form $p(\ell Q^3 n+\alpha)\equiv 0\pmod\ell$ for $\ell\leq 31$.
Ono  \cite{Ono}  proved the existence of infinitely many congruences of the form
 $p(\ell Q^4 n+\alpha) \equiv 0 \pmod\ell$
 for any prime $\ell\geq 5$ (Treneer \cite{Treneer} has shown that such congruences exist for a very general class of weakly holomorphic modular forms). 
On the other hand, 
the results of \cite{ahlgren-beckwith-raum} show that for a given $\ell$ there are very few, if any,
congruences of the form $p(\ell Q n+\alpha)\equiv 0\pmod\ell$
or 
$p(\ell Q^2 n+\alpha)\equiv 0\pmod\ell$.

Until  recently, it was not known whether there were congruences like Atkin's \eqref{eq:atkin-cong} for primes larger than $31$.
In joint work with Allen and Tang \cite{Ahlgren-Allen-Tang}, the first author proved that for every prime $\ell\geq 5$, there exists a set of primes $Q$ of positive density for which congruences of the form \eqref{eq:atkin-cong} hold.

The proofs of the  results described above  rely on the fact that the generating function for $p(n)$ is a modular form of weight $-1/2$ on the  modular group $\SL_2(\Z)$.
In this paper we will study the $m$-colored generalized Frobenius partition function $\cpm(n)$ where $m$ is a positive integer.   This is a natural higher-level analogue of $p(n)$; we have $c\phi_1(n)=p(n)$ \cite[$(5.15)$]{Andrews}, and the 
 generating function for $\cpm(n)$ is a modular form of weight $-1/2$ on $\Gamma_0(m)$ (if $m$ is odd,  which is the only relevant case for us).
 
To define these partition functions requires  some notation.
To each partition of $n$, we can associate an array
\begin{equation} \label{eq:frob-symb}
    \left( \begin{array}{cccc}
    a_1 & a_2 & \cdots & a_r \\
    b_1 & b_2 & \cdots&b_r\end{array} 
    \right)
\end{equation}
of non-negative integers with strictly decreasing rows and with
\begin{equation} \label{eq:n-sum-frob-symb}
    n=r+\sum^{r}_{i=1}a_i+\sum^{r}_{i=1} b_i.
\end{equation}
The number $r$ is the length of the diagonal in the Ferrers diagram for the partition, the $a_i$ are the numbers of dots to the right of the diagonal in each row, and the $b_i$ are the numbers of dots below the diagonal in each column.
The array \eqref{eq:frob-symb} is called a Frobenius symbol.

In \cite{Andrews}, Andrews introduced $m$-colored generalized Frobenius symbols.
These are symbols \eqref{eq:frob-symb} in which 
entries are colored with one of $m$ colors,  these colors are given an ordering, and  the rows retain the property of being strictly decreasing.
Let $\cpm(n)$ denote the number of $m$-colored generalized Frobenius symbols which sum to $n$ via \eqref{eq:n-sum-frob-symb}.
For example, the nine $2$-colored generalized Frobenius symbols of $2$ are
\begin{gather}
    \binom{1_1}{0_1},\ \binom{1_1}{0_2},\ \binom{1_2}{0_1},\ \binom{1_2}{0_2}, \binom{0_1}{1_1},\ \binom{0_1}{1_2},\ \binom{0_2}{1_1},\ \binom{0_2}{1_2},\ \pMatrix{0_2}{0_1}{0_2}{0_1},
\end{gather}
where we have imposed the ordering in which the color ``2'' is greater than the color ``1''.

The generating function for $c\phi_m(n)$ is (see \cite[Theorem 5.2]{Andrews})
\begin{equation}\label{eq:cphidef}
\sum c\phi_m\pfrac{n+m}{24}q^\frac n{24}=\eta^{-m}(z)\sum_{n=0}^\infty r_m(n)q^n,
\end{equation}
where $r_m(n)$ is the number of representations of $n$ by the quadratic form
\begin{equation}\label{eq:r_Qdef}
    \sum_{i=1}^{m-1}x_i^2+\sum_{1\leq i<j\leq m-1}x_ix_j
\end{equation}
and $\eta(z)$ is defined in \eqref{eq:etadef} below
(note that when $m=1$ the infinite sum in \eqref{eq:cphidef} becomes the number $1$).

In \cite{Chan-Wang-Yang}, Chan, Wang and Yang developed many modular properties of the generating functions \eqref{eq:cphidef} and made an intensive study of the generating functions for $m\leq 17$ 
(the bibliography of \cite{Chan-Wang-Yang} also contains many references to previous work on this topic).
For small primes $m$, the generating function for $c\phi_m(n)$ is particularly simple.
Equations (1.13) -- (1.15) of \cite{Chan-Wang-Yang} state that for $m\in \{5,7,11\}$ we have
\begin{equation}\label{eq:cpmram}
    c\phi_m(n) = p(n/m) + m p(mn-\beta_m), \qquad \text{where } \beta_m = \tfrac{m^2-1}{24}.
\end{equation}
From these and \eqref{eq:ram-cong} we easily obtain  six analogues of the Ramanujan congruences \eqref{eq:ram-cong}:
\begin{equation} \label{eq:cpramcong}
    \begin{aligned}
    c\phi_5\pfrac{\ell n+5}{24} &\equiv 0 \pmod{\ell}, \qquad \ell=7,11,\\
    c\phi_7\pfrac{\ell n+7}{24} &\equiv 0 \pmod{\ell}, \qquad \ell=5, 11,\\
    c\phi_{11}\pfrac{\ell n+11}{24} &\equiv 0 \pmod{\ell}, \qquad \ell=5, 7.
    \end{aligned}
\end{equation}

The theta function  $\sum r_m(n)q^n$ is a holomorphic modular form of weight $(m-1)/2$ and level $m$ or $2m$.  It 
follows from the results of 
Treneer \cite{Treneer} that if $\ell\geq 5$ is a prime with $\ell\nmid m$ and $j$ is a positive integer,  then there are infinitely many $Q$ giving rise to  congruences of the form
\begin{equation}\label{eq:treneerfrob}
    c\phi_m\pfrac{\ell^kQ^3 n+m}{24}\equiv 0\pmod {\ell^j} \quad \text{ if } (n, \ell Q)=1
\end{equation}
where $k$ is sufficiently large (see \cite[Theorem~2.1]{Chan-Wang-Yang}, \cite[Lemma~1, Corollary~1]{jameson_wieczorek} for details).

In \cite{AAD}, the authors developed a precise Shimura lift for modular forms which transform like powers of the eta function and developed some  arithmetic applications which generalize the results of \cite{Ahlgren-Allen-Tang} to a wide class of half-integral weight modular forms.
In this paper we build on those  results to
show, for each prime $m\geq 5$, that $\cpm(n)$ has congruences like Atkin's \eqref{eq:atkin-cong} for all primes $\ell$ outside of an explicit finite set.
In view of  the linear congruences \eqref{eq:cpramcong}, this is  interesting only if one of $\ell,m$ is at least $13$. We  note \cite[Theorem 10.2]{Andrews} that the situation is  simpler when $\ell=m$.  We also note that an extended example in the case $m=5$, $\ell=13$ is given in \cite[\S8]{AAD}.

Throughout the paper we will assume that $\ell, m\geq 5$ are  distinct primes and we define $\bl\in \{1, \dots, 23\}$ by $\bl\equiv \ell\pmod{24}$.  
For general $m$ we will operate under the hypothesis
\begin{equation}\label{eq:lmcond}
 \ell\bar{\ell} > m^2.
\end{equation}
For small values of $m$ it is possible to relax this hypothesis.  For example
\begin{equation}\label{eq:smalllmcond}
\text{If $m=13$ we require $\ell\geq 7$.}\ \ \ \text{If  $m=5, 7, 11$ we require only $\ell\neq m$}.
\end{equation}
Note that \eqref{eq:smalllmcond} includes such pairs as $m=11$, $\ell=97$ or $m=13$, $\ell=29$ which do not satisfy \eqref{eq:lmcond}.
With additional work one could 
relax the hypothesis \eqref{eq:lmcond} for other small values of $m$.  We now state the main results.

 \begin{theorem}\label{thm:frob1}
 Suppose that $\ell, m \geq 5$ are  distinct primes satisfying 
 \eqref{eq:lmcond} or \eqref{eq:smalllmcond}.
 Then there exists a positive density set $S$ of primes such that if $p \in S$ then $p \equiv 1 \pmod\ell $ and
\begin{equation}
 c\phi_m\(\frac{\ell p^2n+m}{24}\) \equiv 0 \pmod\ell  \ \ \ \text{ if }
\ \ \ \pfrac{n}{p}=
\pfrac{-1}{p}^{\frac{m\ell+1}{2}}\pfrac{p}{m}.
\end{equation}
 \end{theorem}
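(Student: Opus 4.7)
The plan is to extend the approach of \cite{Ahlgren-Allen-Tang} and its half-integral weight generalization in \cite{AAD} to the colored Frobenius setting, using the explicit modular structure of \eqref{eq:cphidef}.

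First, I would repackage the relevant values of $c\phi_m$ into a half-integral weight cusp form on $\Gamma_0(m)$. Starting from the weight $-1/2$ generating function \eqref{eq:cphidef} on $\Gamma_0(m)$, I would apply a sieve of the form $U_\ell V_\ell$ (or a character twist) to pick out the $\ell$-divisible Fourier coefficients indexed by $n\equiv m\pmod{24}$. Multiplying by a suitable power of $\eta$ and using standard facts about $\ell$-adic modular forms, I would obtain a holomorphic half-integral weight cusp form $g$ on $\Gamma_0(m)$ carrying the eta multiplier whose coefficients reduce modulo $\ell$ to $c\phi_m((\ell n + m)/24) \pmod \ell$. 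The hypothesis \eqref{eq:lmcond}, or its refinement \eqref{eq:smalllmcond} for small $m$, enters here to bound the weight and control the finite-dimensional space in which $g$ must live; verifying that $g$ is nonzero modulo $\ell$ (which is essential for the rest of the argument) is the ``theoretical and computational'' part promised in the abstract.

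Second, I would apply the precise Shimura lift developed in \cite{AAD} for forms with the eta multiplier to produce an integer-weight cusp form $F$ on $\Gamma_0(m)$. The Shimura correspondence is equivariant for Hecke operators, so that $T_{p^2}$ acting on $g$ corresponds, up to a normalizing factor, to $T_p$ acting on $F$. The nebentypus of $F$ is built from $\pfrac{\cdot}{m}$ together with a power of $\pfrac{-1}{\cdot}$ determined by the weight of $g$ and by $m$, and this is precisely the source of the character $\pfrac{-1}{p}^{(m\ell+1)/2}\pfrac{p}{m}$ appearing in the statement.

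Third, I would decompose $F$ modulo $\ell$ into Hecke eigenforms and attach to each nontrivial component its residual Galois representation $\bar\rho \colon \Gal(\bar\Q/\Q) \to \GL_2(\bar\F_\ell)$. The simultaneous conditions $a_p(F) \equiv 0 \pmod\ell$ and $p\equiv 1 \pmod\ell$ translate to a union of conjugacy classes in the image of $\bar\rho$ composed with the mod-$\ell$ cyclotomic character, and Chebotarev's density theorem produces a positive density set $S$ of primes $p$ realizing these Frobenius classes. Translating back through the Shimura identity for $g\mid T_{p^2}$, the vanishing of $a_p(F) \bmod \ell$ forces the $\ell p^2 n$-th coefficient of $g$ to vanish for exactly those $n$ with $\pfrac{n}{p}$ equal to the character value above, which is the desired congruence.

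The hard part is the first step: the explicit production of a cusp form $g$ that survives reduction modulo $\ell$. If $g$ vanishes mod $\ell$ the Galois-theoretic argument in the final step degenerates to a tautology. The hypothesis \eqref{eq:lmcond} is introduced precisely so that the space of cusp forms into which $g$ is placed is small and manageable; relaxing it for $m\in\{5,7,11,13\}$ as in \eqref{eq:smalllmcond} requires a finer analysis of the specific low-weight spaces, handled case by case. The Chebotarev/Galois step, by contrast, is essentially standard once a suitable $F$ is in hand, since $\ell$ and $m$ are coprime and the hypotheses on $\ell$ rule out the small residual images that would obstruct finding Frobenius elements with trace zero.
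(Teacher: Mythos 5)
Your overall route matches the paper's: build a half-integral weight cusp form on $\Gamma_0(m)$ with eta multiplier capturing $\cpm\pfrac{\ell n+m}{24}$ modulo $\ell$, then feed it into the Shimura-lift/Galois machinery of \cite{AAD} and conclude by Chebotarev. But the two places you treat as routine, or whose difficulty you locate incorrectly, are exactly where the paper's work lies. In the construction step, the obstruction is \emph{not} that the form might vanish modulo $\ell$ (if it did, the asserted congruence would hold trivially for all $n$). The real issue is that the form $f_\ell$ produced by the $U_\ell$/filtration argument (Lemma~\ref{lem:flconstruct}) is only known to vanish modulo $\ell$ to order $>\bl m/24$ at $\infty$, whereas to divide by $\eta^{\bl}(mz)$ and land in $S_{(\ell-2)/2}\(m,\(\frac{\bullet}{m}\)\nu^{-m\ell}\)$ one needs genuine vanishing of $f_\ell$ itself to that order; there are many forms whose order of vanishing modulo $\ell$ exceeds their actual order of vanishing. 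The paper repairs this by subtracting off an echelon basis $G_j=q^j+O(q^{j+1})$, $j\leq\lfloor \bl m/24\rfloor$, of $S_{(\ell+\bl-2)/2}(m)$ with $\ell$-integral coefficients (Proposition~\ref{prop:ech}), whose existence is itself nontrivial and is proved by a case split on $m$ (level-one forms for $\bl=1$ or $m>523$, products of weight-two forms and an Eisenstein series for intermediate $m$, plus Sage computations), with separate explicit constructions for the pairs allowed by \eqref{eq:smalllmcond}. Your proposal contains no mechanism for this step.

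The Galois step is also not ``essentially standard once $F$ is in hand.'' The theorem of \cite{AAD} you would invoke requires \emph{suitability}: every newform in $S^{\new 2,3}_{\ell-3}\(6m,-\pfrac{8}{\ell},-\pfrac{12}{\ell}\)$ must have residual representation whose image contains a conjugate of $\SL_2(\F_\ell)$. The hypotheses \eqref{eq:lmcond} and \eqref{eq:smalllmcond} do not by themselves rule out small images; the paper verifies suitability via the criteria of \cite[Prop.~7.2]{AAD}, which fail to apply for the pairs with $\ell\in\{7,13,19\}$ listed in Lemma~\ref{lem:suit}, and for those pairs reducible, dihedral, and exceptional images must be excluded by explicit computations with the actual newforms (coefficient checks against \cite{lmfdb} for the dihedral case, and Ribet-type trace conditions at small primes checked in Magma for the exceptional case). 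Moreover, for the conclusion $p\equiv 1\pmod\ell$ the relevant Frobenius classes are scalar-type classes, not ``trace zero''; it is precisely the large-image property that guarantees such classes occur with positive density and yields the stated condition $\pfrac np=\pfrac{-1}{p}^{\frac{m\ell+1}{2}}\pfrac pm$. Without these verifications your argument is incomplete exactly for the small pairs $(\ell,m)$ that make the theorem delicate.
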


\begin{theorem}\label{thm:frob2}
  Suppose that $\ell, m \geq 5$ are  distinct primes satisfying 
 \eqref{eq:lmcond} or \eqref{eq:smalllmcond}. 
 Suppose further that there exists $a\in \Z$ such that $2^a\equiv -2\pmod\ell$.
 Then there exists a positive density set $S$ of primes and $\ep_{p} \in \{\pm 1\}$ such that if $p \in S$ then $p \equiv -2 \pmod{\ell}$ and 
 \begin{equation}
 c\phi_{m}\(\frac{\ell p^2n+m}{24}\) \equiv 0 \pmod{\ell}  \ \ \text{ if } \ \ \ \(\frac{n}{p}\)=\ep_{p}.
 \end{equation}
 \end{theorem}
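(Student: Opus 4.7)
The plan is to run the argument parallel to Theorem~\ref{thm:frob1}, modifying only the target conjugacy class of Frobenius. The key inputs are the half-integral weight cusp form $f$ on $\Gamma_0(m)$ (constructed earlier in the paper) whose Fourier coefficients capture the values $c\phi_m\!\left(\tfrac{n+m}{24}\right)$ modulo $\ell$, together with its integer-weight Shimura lift $F$ produced by the machinery of \cite{AAD}. The Shimura correspondence matches $T_{p^2}$ on $f$ with $T_p$ on $F$ modulo $\ell$, and reduces the problem to producing primes $p \equiv -2 \pmod\ell$ with a prescribed Hecke eigenvalue of $F$ modulo $\ell$.

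Writing the standard $T_{p^2}$-formula for the Fourier coefficients $a(n)$ of $f$ and using $f\mid T_{p^2} \equiv \lambda_p f \pmod\ell$, one obtains, for $(n,p)=1$,
\begin{equation*}
 a(p^2 n) \equiv \left(\lambda_p - \chi^*(p)\, p^{k-1}\pfrac{n}{p}\right) a(n) \pmod{\ell},
\end{equation*}
where $k+\tfrac{1}{2}$ is the weight of $f$ and $\chi^*$ absorbs the nebentypus together with the appropriate $\pfrac{-1}{p}^{k}$-factor. Whenever $\lambda_p^2 \equiv (\chi^*(p)\, p^{k-1})^2 \pmod\ell$ and $\lambda_p \not\equiv 0 \pmod\ell$, the sign $\ep_p := \lambda_p/(\chi^*(p)p^{k-1}) \in \{\pm 1\}$ is well defined modulo $\ell$, and $a(p^2 n) \equiv 0 \pmod\ell$ precisely when $(n,p)=1$ and $\pfrac{n}{p} = \ep_p$. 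The coefficient-encoding of $c\phi_m$ by $f$ then yields the congruence in the statement.

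To produce primes $p$ simultaneously satisfying $p\equiv -2\pmod\ell$ and the trace condition, decompose $F$ into Hecke eigenforms over a sufficiently large extension and study the residual Galois representations $\overline{\rho}_\ell$ attached to them. The two conditions translate into $\det \overline{\rho}_\ell(\Frob_p) \equiv -2 \pmod\ell$ (which, since the determinant of $\overline{\rho}_\ell$ is essentially a power of the cyclotomic character, corresponds to $p \equiv -2 \pmod\ell$) and $\Tr \overline{\rho}_\ell(\Frob_p) \equiv \pm\, \chi^*(p)\, p^{k-1} \pmod\ell$. The hypothesis $2^a \equiv -2 \pmod \ell$ is precisely what ensures that the required pair of trace and determinant values is realized by an actual conjugacy class in the image of $\overline{\rho}_\ell$: it places $-2$ in the subgroup of $(\Z/\ell\Z)^\times$ generated by the powers of $2$ arising on the trace side, so that the combined trace-determinant condition is consistent.

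The main obstacle, as in Theorem~\ref{thm:frob1}, lies in controlling the image of $\overline{\rho}_\ell$. Under \eqref{eq:lmcond} (or \eqref{eq:smalllmcond} for the listed small values of $m$), one rules out the reducible and small-image irreducible cases---dihedral and exceptional types---for all newform components arising from $F$. Once the image is known to be suitably large, the Chebotarev density theorem supplies a positive-density set $S$ of primes with $\Frob_p$ in the prescribed conjugacy class; the sign $\ep_p$ is then recovered from $\Tr \overline{\rho}_\ell(\Frob_p)$ as above.
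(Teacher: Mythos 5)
There is a genuine gap at the heart of your argument, and it concerns the step you describe last. The paper's proof of this theorem is a direct application of \cite[Theorem 1.4]{AAD} to the form $F_\ell$ of Theorem~\ref{thm:Flconstruct}, and the mechanism behind that result (as in the $p\equiv -2$ theorem of \cite{Ahlgren-Allen-Tang}) is \emph{not} ``large image plus Chebotarev.'' Instead one constructs the required Galois element explicitly inside the decomposition group at $2$: every newform constituent $f$ of the relevant Shimura lift lies in $S^{\new 2,3}_{2\lambda}(6N,\ep_2,\ep_3)$, so it is new (Steinberg) at $2$ with $a_f(2)=\ep_2\, 2^{\lambda-1}$, where the Atkin--Lehner sign $\ep_2$ is the \emph{same} for every $f$ in the space. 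Hence $\bar\rho_f(\Frob_2)$ has eigenvalues $a_f(2)$ and $2a_f(2)$, and for $\sigma=\Frob_2^{\,a}$ one gets $\omega(\sigma)=2^a\equiv -2$ and $\operatorname{tr}\bar\rho_f(\sigma)\equiv (1+2^a)\,a_f(2)^a\equiv \mp(-2)^{\lambda-1}\pmod\ell$ \emph{simultaneously for all} $f$, which is exactly the eigenvalue condition forcing $a(p^2n)\equiv 0$ for $\pfrac np=\ep_p$. Chebotarev applied to this single element then yields the positive density set $S$, with no hypothesis whatsoever on the images of the $\bar\rho_f$; this is why the paper's proof needs no suitability verification here, in contrast to Theorem~\ref{thm:frob1}.

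Your route instead asks for suitability (ruling out reducible, dihedral and exceptional images) and then invokes Chebotarev for ``the prescribed conjugacy class.'' This does not go through as stated: the congruence for $c\phi_m$ requires $T_p$ to act modulo $\ell$ by the same scalar $\mp(-2)^{\lambda-1}$ on \emph{every} newform constituent at once, so the prescribed class must exist in the image of the product representation $\prod_f \bar\rho_f$ (together with $\omega$). Knowing that each individual image contains a conjugate of $\SL_2(\F_\ell)$ gives no control over this joint image, so the existence of primes $p$ realizing all the trace conditions simultaneously is not established by your argument. Relatedly, your interpretation of the hypothesis $2^a\equiv -2\pmod\ell$ as a trace/determinant ``consistency'' condition is not its actual role: since $\omega$ is surjective and a group containing $\SL_2(\F_\ell)$ realizes every trace with every attainable determinant, your proposed argument would never need that hypothesis at all. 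Its true function is to make the explicit element $\Frob_2^{\,a}$ have cyclotomic value $-2$, which is what produces the primes $p\equiv -2\pmod\ell$. The first part of your write-up (the half-integral weight $T_{p^2}$ relation and the deduction of the congruence once $\lambda_p\equiv\pm\psi^*(p)p^{\lambda-1}$) is consistent with the machinery of \cite{AAD}, but the crucial production of suitable primes is where the proposal diverges from, and falls short of, the paper's proof.
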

We note that the sets $S$ in these results are  frobenian  in the sense of Serre \cite[\S 3.3]{serre-NXp}  (we do not prove that the set of all primes satisfying the conclusions is frobenian).  See the end of Section~1 of \cite{Ahlgren-Allen-Tang} for a more complete discussion.
We also note that by a result of Hasse \cite{Hasse}, the proportion of primes $\ell$ for which there exists $a$ with 
$2^a\equiv -2\pmod\ell$ is $17/24$.

Let $S_k(N, \omega)$  denote the space of cusp forms of weight $k$ and multiplier system $\omega$ on $\Gamma_0(N)$, and let $\nu$ denote the multiplier system for the Dedekind eta function (see 
\eqref{eq:etamult} below).
An important tool to study congruences for $p(n)=c\phi_1(n)$ is the fact that for any $\ell\geq 5$ there
is a modular form $f_\ell\in S_{(\ell-2)/2}\left(1, \nu^{-\ell}\right)$ 
such that 
\begin{gather} \label{eq:level1-fell}
    f_\ell \equiv \sum p\pfrac{\ell n+1}{24}q^\frac n{24} \pmod \ell.
\end{gather}
The existence of an $f_\ell$ satisfying \eqref{eq:level1-fell} follows from 
a  construction using the theory of modular forms modulo $\ell$ (see \cite{ahlgren-beckwith-raum}; in particular the remark following (1.8) and the discussion in Section~2).
Our theorems rely on the analogous result for $\cpm$ for primes $m\geq 5$.  The construction of these forms (which is given in Section~\ref{sec:construct}) is by contrast much more involved, and requires a mixture of theoretical and computational methods.

\begin{theorem} \label{thm:Flconstruct}   Suppose that $\ell, m \geq 5$ are  distinct primes satisfying 
 \eqref{eq:lmcond} or \eqref{eq:smalllmcond}.
Then there is a modular form  $F_\ell\in S_{(\ell-2)/2}\(m,\(\frac{\bullet}{m}\)\nu^{-m\ell}\)$ such that
\begin{equation} F_\ell(z)\equiv \sum \cpm\pfrac{\ell n+m}{24}q^\frac n{24}\pmod\ell.
\end{equation}
\end{theorem}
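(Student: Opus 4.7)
The plan is to construct $F_\ell$ in three stages: first, build a holomorphic modular form of weight $(\ell-2)/2$ on $\Gamma_0(m\ell)$ with multiplier $\left(\tfrac{\bullet}{m}\right)\nu^{-m\ell}$ that is congruent modulo $\ell$ to $F(z) := \eta(z)^{-m}\Theta_m(z) = \sum\cpm\left(\tfrac{n+m}{24}\right)q^{n/24}$; second, apply a $U_\ell$-type sieve to extract the arithmetic progression of interest; third, descend the resulting form from $\Gamma_0(m\ell)$ to $\Gamma_0(m)$.

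For the first stage, I would exploit the Frobenius congruence $\eta(z)^\ell \equiv \eta(\ell z) \pmod\ell$, which produces $\eta$-quotients of the form $\eta(z)^{a\ell}/\eta(\ell z)^a$ that are congruent to $1$ modulo $\ell$. By combining such quotients with suitable exponents, I would build a holomorphic $\eta$-quotient $J$ on $\Gamma_0(m\ell)$ of weight $(\ell-1)/2$ and multiplier $\nu^{-m(\ell-1)}$ (so that $F\cdot J$ inherits the target multiplier $\left(\tfrac{\bullet}{m}\right)\nu^{-m\ell}$), with $J\equiv 1\pmod\ell$ and with sufficient order of vanishing at the non-$\infty$ cusps to absorb the poles of the weakly holomorphic $F$. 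The product $G := F\cdot J$ is then a holomorphic modular form of the correct weight and multiplier, congruent to $F$ modulo $\ell$.

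Applying $U_\ell$ (on $q^{1/24}$-expansions) to $G$ yields a modular form $G\,|\,U_\ell$ on $\Gamma_0(m\ell)$ of weight $(\ell-2)/2$ with the same multiplier, whose $q$-expansion is congruent to $\sum\cpm\left(\tfrac{\ell n+m}{24}\right)q^{n/24}$ modulo $\ell$. The decisive stage is to show that $G\,|\,U_\ell$ is congruent modulo $\ell$ to a form defined on $\Gamma_0(m)$, cuspidal at every cusp. Under the hypothesis \eqref{eq:lmcond}, a theoretical argument combining the structure theory of half-integral weight modular forms modulo $\ell$ with cusp-order estimates accomplishes the descent, the inequality $\ell\bar\ell>m^2$ being precisely the threshold that guarantees sufficient vanishing at the non-$\infty$ cusps of $\Gamma_0(m)$. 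For the primes allowed by \eqref{eq:smalllmcond} but not by \eqref{eq:lmcond}, the existence of $F_\ell$ is verified by explicit computation inside the finite-dimensional space $S_{(\ell-2)/2}\left(m,\left(\tfrac{\bullet}{m}\right)\nu^{-m\ell}\right)$, finding an element whose $q$-expansion matches the desired series to enough terms to pin it down modulo $\ell$.

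The main obstacle is the third stage: descending to $\Gamma_0(m)$ while preserving cuspidality at every cusp. The eta multiplier is a $24$-th root of unity that interacts subtly with the prime $\ell$, so ensuring that $J$ has exactly the required multiplier requires careful bookkeeping modulo $24$ across the various $\eta(z)$ and $\eta(\ell z)$ factors. The bound \eqref{eq:lmcond} arises naturally as the threshold above which a uniform theoretical argument produces the descent, while the remaining cases described by \eqref{eq:smalllmcond} must be handled by the explicit computations alluded to in the paper's introduction.
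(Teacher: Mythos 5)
Your outline diverges from the paper's argument and, more importantly, misses the step that the paper identifies as the real obstruction. The paper never works on $\Gamma_0(m\ell)$ and never needs a level-descent: it multiplies by $\eta^{\ell\bar\ell}(mz)$ to form $h_\ell=\eta^{\ell\bar\ell}(mz)\sum c\phi_m\(\tfrac{n+m}{24}\)q^{n/24}$, which is already an \emph{integral-weight} cusp form on $\Gamma_0(m)$ (cuspidality at the cusp $0$ is exactly where $\ell\bar\ell>m^2$ enters), applies $U_\ell$, and then uses the filtration lemma (Lemma~\ref{lem:filt}, together with $h_\ell\sl U_\ell\equiv h_\ell\sl T_\ell$) to find $f_\ell\in S_{(\ell+\bar\ell-2)/2}(m,\Z)$ congruent to $h_\ell\sl U_\ell$. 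The reason for passing to integral weight is that the filtration/weight-lowering theory you would need for your ``structure theory of half-integral weight modular forms modulo $\ell$'' descent is not available in that setting; your proposal leans on it without justification, and it is also not clear that $U_\ell$ preserves the multiplier $\(\tfrac{\bullet}{m}\)\nu^{-m\ell}$ on $\Gamma_0(m\ell)$ as you assert.

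The decisive difficulty, which your proposal does not address at all, comes after this: to obtain $F_\ell$ one must divide $f_\ell$ by $\eta^{\bar\ell}(mz)$, and for the quotient to be a cusp form one needs $f_\ell$ itself to vanish at $\infty$ to order $>\bar\ell m/24$. The congruence only guarantees that $f_\ell$ vanishes \emph{modulo $\ell$} to that order; a form can vanish mod $\ell$ to much higher order than it actually vanishes. The paper overcomes this with Proposition~\ref{prop:ech}, constructing cusp forms $G_j=q^j+O(q^{j+1})$, $j\le\lfloor\bar\ell m/24\rfloor$, in $S_{(\ell+\bar\ell-2)/2}(m)$ with $\ell$-integral coefficients, so that the offending low-order coefficients (all divisible by $\ell$) can be subtracted off without changing $f_\ell$ mod $\ell$; proving that proposition takes most of Section~3 and mixes level-one arguments with Sage computations for $37\le m\le 523$. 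Without this ingredient (or a substitute for it), your third stage cannot be completed, so the proposal has a genuine gap. The computational fallback for the exceptional pairs in \eqref{eq:smalllmcond} is in the right spirit, though the paper carries it out in integral-weight spaces with explicit eta-quotient bases rather than directly in $S_{(\ell-2)/2}\(m,\(\tfrac{\bullet}{m}\)\nu^{-m\ell}\)$.
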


With  additional work, some of the results above can likely be generalized  to produce congruences for $\cpm(n)$ with powers of  $\ell$, square-free values of $m$, and  $n$ in other congruence classes modulo $\ell$.

\section{Background}\label{sec:GFP}
Here we briefly recall some essential facts (a more complete treatment of some of these facts can be found in 
\cite[\S3]{AAD}).
If $f$ is a function on the upper half-plane $\H$,  $k\in \frac12\Z$,  and $\gamma=\pmatrix abcd\in \GL_2^+(\Q)$, 
we define the slash operator in weight $k$ by 
\begin{equation}
\(f\|_k\gamma\)(z)=(\det\gamma)^\frac k2(cz+d)^{-k}f(\gamma z),
\end{equation}
where we always take the principal branch of the square root.
Let  $N$ be a positive integer and let $\omega$ be a multiplier on $\Gamma_0(N)$.  If $A$ is a subring of $\C$ then we denote by 
$M_k(N, \omega, A)$ the $A$-module of functions on $\H$  which  satisfy the transformation law
\begin{equation}
f\|_k\gamma=\omega(\gamma)f \quad \text{ for all } \gamma \in \Gamma_0(N),
\end{equation}
 which are holomorphic on $\H$ and at the cusps,
and whose Fourier coefficients lie in $A$.
We denote the subspace of cusp forms by   $S_k(N, \omega, A)$, 
and the space of weakly holomorphic  modular  forms (which are allowed poles at the cusps) by $M_k^!(N, \omega, A)$.
Throughout the paper $\ell\geq 5$ will be a fixed prime.  
When $A\subset \C$ is the ring of algebraic integers we will omit it from the notation; we will also omit the multipler $\omega$ from the notation when it is trivial.

We will be primarily concerned with the eta-multiplier $\nu$, which is defined
by 
\begin{equation}\label{eq:etamult}
\eta\|_\frac12\gamma=\nu(\gamma)\eta, \qquad \gamma\in \SL_2(\Z),
\end{equation}
where the Dedekind eta function is the modular form of weight $1/2$ on $\SL_2(\Z)$ defined by 
 \begin{equation}\label{eq:etadef}
  \eta(z)=q^\frac1{24}\prod_{n=1}^\infty(1-q^n), \qquad q:=e^{2\pi i z}.
  \end{equation}

We will  require Hecke theory only for the spaces $M_k(N)$ (where $k$ is an even integer).  
If $f=\sum a(n)q^n\in M_k(N)$ and $p\nmid N$
then we have the Hecke operator
$T_p: M_k(N)\to M_k(N)$ defined by 
\begin{equation}
    f\sl T_p=\sum\(a(pn)+p^{k-1}a\pfrac np\)q^n.
\end{equation}
For any prime $p$ we have the $U$ and $V$ operators defined by 
\begin{equation}
    f\sl U_p=\sum a(pn)q^n\ \ \ \ f\sl V_p=\sum a(n)q^{pn}.
\end{equation}
If $p\mid N$ then $f\sl U_p\in M_k(N)$, and for any $p$ we have $f\sl U_p, f\sl V_p\in M_k(pN)$.

For primes $p \mid N$, we define the space of forms $S^{\new p}_k(N,\C)$ which are $p$-new  by 
\begin{equation}
S^{\new p}_k(N,\C):= \bigoplus_{td \mid N, \ p \mid t}S^{\new}_{k}(t,\C) \sl V_d.
\end{equation}
By $S_k^{\new}(N)$, $S^{\new p}_k(N)$, etc. we mean the corresponding subspaces of forms with algebraic integer coefficients.

 If $p\mid N$ and $(p, N/p)=1$, then    an Atkin-Lehner matrix $W^N_p$ is any integral matrix with 
\begin{equation}\label{eq:atkinlehner}
W^N_p =\pMatrix{pa }b{Nc}{p }, \ \ \ \ a,b,c \in \Z, \ \ \ \ \det(W^N_p)=p.
\end{equation}
Suppose that   $f\in S^{\new p}_k(N)$ is a newform; i.e. there exists a level $N'$ with $p\mid N'\mid N$
such that 
$f\in S^{\new}_k(N')$ and $f$ is  a normalized eigenform of the operators $T_Q$ for $Q\nmid N'$ and $U_Q$ for $Q\mid N'$.   Then there is an Atkin-Lehner eigenvalue $\ep_p\in \{\pm 1\}$ such that
$f\sl_k W_p^N=f\sl_k W_p^{N'}=\ep_p f$; if $f=\sum a(n)q^n$ then $\ep_p=-p^{1-k/2}a(p)$ \cite[Theorem $3$]{Atkin-Lehner}.

We will also  require some standard facts about filtrations.
 Suppose that $\ell \geq 5$ is prime. 
 If $f =\sum a(n)q^n\in \Z[\![q]\!]$  then we define
\begin{equation}
\bar f:= \sum \bar{a(n)}q^n \in \mathbb F_\ell [\![q]\!].
\end{equation}
If  $\bar f$ is the reduction modulo $\ell$ of an element of $M_k(N, \Z)$ for some $k$ then we define the filtration
\begin{equation}
w_\ell (\bar f):= \inf\{ k':  \ \text{there exists $g \in M_{k'}\(N,\Z\)$ with $\bar f=\bar  g$}\}.
\end{equation} 
Then we have  the following  (see e.g. \cite[\S 1]{Jochnowitz}).
\begin{lemma}\label{lem:filt}
  Suppose that $\ell\geq 5$ is prime and that $f \in M_ k\(N,\Z\)$. Then we have  
\begin{enumerate}
\item
If $g \in M_{k'}\(N,\Z\)$ has $\bar f=\bar  g$ then  $k \equiv k'\pmod{\ell-1}$.
\item $w_\ell\(\bar{f\sl U_{\ell}} \)\leq (w_\ell(\bar f)-1)/\ell+\ell$.
\end{enumerate}
\end{lemma}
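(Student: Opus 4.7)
The plan is to reduce both parts to standard techniques for modular forms modulo $\ell$, going back to Serre, Swinnerton-Dyer, and Katz; the level-one statement is proved carefully in \cite{Jochnowitz}, and the general-level version proceeds along the same lines.

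For part (1), the key input is the Eisenstein series $E_{\ell-1}$ for $\SL_2(\Z)$, which satisfies $E_{\ell-1}\equiv 1\pmod{\ell}$ by the Kummer congruence (valid for $\ell\geq 5$). Since $E_{\ell-1}\in M_{\ell-1}(N,\Z)$ for every $N$, one direction is immediate: if $(\ell-1)\mid (k'-k)$ with $k'\geq k$, then $E_{\ell-1}^{(k'-k)/(\ell-1)}\cdot f$ lies in $M_{k'}(N,\Z)$ and reduces to $\bar f=\bar g$. For the converse, one must show that the graded ring of mod-$\ell$ modular forms on $\Gamma_0(N)$ carries a well-defined $\Z/(\ell-1)\Z$-grading by weight. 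This is the content of the Serre--Swinnerton-Dyer structure theorem in level one, extended by Katz to general level: the kernel of the reduction map from $\bigoplus_k M_k(N,\Z)$ to $\F_\ell[\![q]\!]$ is generated by $E_{\ell-1}-1$. Since $E_{\ell-1}-1$ is homogeneous of weight $\ell-1$, this forces any congruence $\bar f=\bar g$ to satisfy $k\equiv k'\pmod{\ell-1}$.

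For part (2), I would exploit Serre's theta operator $\theta=q\frac{d}{dq}$, which preserves the ring of mod-$\ell$ modular forms on $\Gamma_0(N)$ and satisfies $w_\ell(\theta\bar h)\leq w_\ell(\bar h)+\ell+1$. By Fermat's little theorem,
\begin{equation}
\theta^{\ell-1}\bar f=\sum_{\ell\nmid n}\bar{a(n)}\,q^n=\bar f-\bar{f\sl U_\ell V_\ell},
\end{equation}
so $\bar{f\sl U_\ell V_\ell}=\bar f-\theta^{\ell-1}\bar f$ has filtration at most $w_\ell(\bar f)+(\ell-1)(\ell+1)=w_\ell(\bar f)+\ell^2-1$. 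Combined with the filtration identity $w_\ell(\bar{h\sl V_\ell})=\ell\cdot w_\ell(\bar h)$, applied to $h=f\sl U_\ell$, this gives $\ell\cdot w_\ell(\bar{f\sl U_\ell})\leq w_\ell(\bar f)+\ell^2-1$, and dividing by $\ell$ yields $w_\ell(\bar{f\sl U_\ell})\leq (w_\ell(\bar f)-1)/\ell+\ell$ as required.

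The main technical step is the filtration identity $w_\ell(\bar{h\sl V_\ell})=\ell\cdot w_\ell(\bar h)$ in the level-$N$ setting, since $V_\ell$ enlarges the level from $\Gamma_0(N)$ to $\Gamma_0(\ell N)$ and one must compare filtrations at these two levels. In Jochnowitz's level-one case this follows cleanly from the explicit graded structure of the mod-$\ell$ modular form ring; for general $N$ it is established in the same framework via Katz's theory of $p$-adic modular forms. The remaining ingredients---the theta operator at general level, integrality of $E_{\ell-1}-1$, and the $U_\ell V_\ell$ identity---are formal once these foundations are in place.
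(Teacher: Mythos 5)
The paper does not prove this lemma; it simply cites the standard theory of filtrations (Jochnowitz, \S1), and your proposal reconstructs essentially that standard Serre--Swinnerton-Dyer--Katz argument, so in substance you are on the same route and your argument is correct. Two clarifications are worth making. First, both parts require $\ell\nmid N$ (the Katz structure theorem and the Hasse-invariant description of filtration are statements about level prime to $\ell$); this is harmless here since the paper only applies the lemma with $N=m$ and $\ell\neq m$, but it should be stated. Second, your ``main technical step'' is misdiagnosed: there is no need to compare filtrations at levels $N$ and $\ell N$. Since $\bar{h\sl V_\ell}=\bar h^{\ell}$ in $\F_\ell[\![q]\!]$ (freshman's dream applied to the $q$-expansion), the identity you need is simply $w_\ell\(\bar h^{\ell}\)=\ell\, w_\ell(\bar h)$ at level $N$, i.e.\ the multiplicativity of filtration under powers; this follows because filtration drops exactly when the form is divisible by the Hasse invariant, which has simple zeros on the supersingular locus of $X_0(N)_{\F_\ell}$ when $\ell\nmid N$, so $A\mid \bar h^{\ell}$ iff $A\mid\bar h$. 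With that substitution your computation $\ell\, w_\ell\(\bar{f\sl U_\ell}\)=w_\ell\(\bar f-\theta^{\ell-1}\bar f\)\leq w_\ell(\bar f)+\ell^2-1$ gives the bound exactly as claimed (one should also note, as the paper does when applying the lemma, that $f\sl U_\ell\equiv f\sl T_\ell\pmod\ell$, which guarantees $\bar{f\sl U_\ell}$ is a mod-$\ell$ form of level $N$ so that its filtration is defined).
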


\section{Proof of Theorem~\ref{thm:Flconstruct}}
\label{sec:construct}

Using the connection to quadratic forms described in \eqref{eq:cphidef},
Chan, Wang and Yang \cite[Thm.~$2.1$]{Chan-Wang-Yang} showed that if $m$ is a positive odd integer, then 
\begin{equation}\label{eq:Amdef}
 A_m(z):=\prod_{n \geq 1}(1-q^n)^m\sum^\infty_{n=0} \cpm(n)q^n \in M_\frac{m-1}2\(m, \ptfrac\bullet m\).
\end{equation}

We  first address the main case when the inequality $\ell\bl>m^2$ holds.
We begin with a result which has a straightforward generalization to square-free $m$ which are coprime to $6$; for simplicity we state it only in the case when $m$ is prime.
\begin{lemma}\label{lem:flconstruct}
Let  $\ell, m\geq 5$ be  primes satisfying
\eqref{eq:lmcond}.
Then there exists $f_\ell\in S_{(\ell+\bl-2)/2}(m, \Z)$ such that
\begin{equation} f_\ell(z)\equiv \eta^{\bl}(mz)\sum \cpm\pfrac{\ell n+m}{24}q^\frac n{24}\pmod\ell.
\end{equation}

\end{lemma}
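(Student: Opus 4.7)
Plan: I will construct $f_\ell$ by starting from the theta series $A_m$, dividing by $\eta^m(z)$, applying a coefficient-sieving operator on $q^{1/24}$-expansions, multiplying by $\eta^{\bar\ell}(mz)$, and correcting weight, level, and multiplier modulo $\ell$ using the Frobenius-type identity $\eta(z)^\ell \equiv \eta(\ell z)\pmod\ell$. From \eqref{eq:cphidef} and \eqref{eq:Amdef},
\[
\mathcal G_m(z) := \sum_n c\phi_m\pfrac{n+m}{24} q^{n/24} = A_m(z)/\eta^m(z)
\]
is a weakly holomorphic modular form of weight $-1/2$ on $\Gamma_0(m)$ with multiplier $\pfrac{\bullet}{m}\nu^{-m}$, holomorphic on $\H$ and with a single pole of order $m/24$ at $\infty$. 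Let $U_\ell$ denote the operator on $q^{1/24}$-expansions retaining only terms whose index is divisible by $\ell$. The target of the claimed congruence is $h_\ell(z) := \eta^{\bar\ell}(mz)(\mathcal G_m\,|\,U_\ell)(z)$, of weight $(\bar\ell-1)/2$. The congruences $\bar\ell\equiv\ell\pmod{24}$ and $\ell n\equiv -m\pmod{24}$ on the surviving indices give $n + m\bar\ell\equiv 0\pmod{24}$, so $h_\ell$ has integer $q$-expansion with lowest exponent at least $1$.

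Next, to build $f_\ell$ of integral weight $(\ell+\bar\ell-2)/2$ with trivial character on $\Gamma_0(m)$ satisfying $f_\ell\equiv h_\ell\pmod\ell$, I would rewrite $\mathcal G_m$ modulo $\ell$ as
\[
\mathcal G_m(z) = \frac{A_m(z)\,\eta^{m(\ell-1)}(z)}{\eta^{m\ell}(z)} \equiv \frac{A_m(z)\,\eta^{m(\ell-1)}(z)}{\eta^m(\ell z)}\pmod\ell,
\]
where the numerator is a holomorphic cusp form on $\Gamma_0(m)$ of weight $(m\ell-1)/2$. Applying $U_\ell$ to this representation (via the standard identity $(F\cdot g(\ell z))\,|\,U_\ell = (F\,|\,U_\ell)\cdot g(z)$ when $g$ has period-$1$ expansion), and then multiplying by $\eta^{\bar\ell}(mz)$ together with a mod-$\ell$ weight-raising unit built from $\eta^\ell(z)/\eta(\ell z)\equiv 1\pmod\ell$ (a holomorphic form of weight $(\ell-1)/2$), gives an explicit $q$-series congruent to $h_\ell$ whose coefficients coincide with those of an honest holomorphic modular form $f_\ell$ on $\Gamma_0(m)$ of the desired weight. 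The multiplier bookkeeping---the character $\pfrac{\bullet}{m}$, the eta-multipliers $\nu^{-m}$ and $\nu^{\bar\ell}$ (the latter twisted by $mz$), and the weight-raising correction---can be arranged so that the net character on $\Gamma_0(m)$ is trivial, using $\nu^{24}=1$.

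The main obstacle, and the source of the hypothesis $\ell\bar\ell > m^2$, is verifying that $f_\ell$ is holomorphic at all cusps of $\Gamma_0(m)$. Since $m$ is prime, the only cusps are $\infty$ and $0$; holomorphy at $\infty$ is immediate from the $q$-expansion. At $0$ one applies the Atkin-Lehner involution $W_m$: the theta series $A_m$ satisfies an explicit functional equation under $W_m$ coming from the Gram-matrix transformation of the quadratic form in \eqref{eq:r_Qdef}, and this transformation must be chased through the eta factors $\eta^{-m}(z)$, $\eta^{\bar\ell}(mz)$, $\eta^m(\ell z)$, together with the effect of $U_\ell$ on the expansion at $0$. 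The resulting order of $f_\ell$ at $0$ is an explicit linear form in $\ell$, $\bar\ell$, and $m$ that is non-negative precisely when $\ell\bar\ell > m^2$. For the small-$m$ exceptions in \eqref{eq:smalllmcond}, a direct computation in the finite-dimensional space $S_{(\ell+\bar\ell-2)/2}(m)$ exhibits $f_\ell$ with the required mod-$\ell$ reduction; this combination of cusp analysis, multiplier arithmetic, and computational verification for small cases is the \emph{mixture of theoretical and computational methods} referenced in the introduction.
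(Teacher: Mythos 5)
Your overall shape (start from $A_m/\eta^m$, sieve with $U_\ell$, multiply by $\eta^{\bl}(mz)$, use $\eta^\ell(z)\equiv\eta(\ell z)\pmod\ell$, tie the hypothesis $\ell\bl>m^2$ to holomorphy at the cusp $0$) is in the right spirit, but the crux of the lemma --- producing an honest cusp form of level $m$ and weight exactly $(\ell+\bl-2)/2$ congruent to the target --- is not achieved by your argument. The paper works with the single eta-quotient $h_\ell=\frac{\eta^{\ell\bl}(mz)}{\eta^m(z)}A_m(z)$, whose exponent $\ell\bl$ is chosen so that the total multiplier is trivial and so that (since $A_m$ is already holomorphic at all cusps) only the eta-quotient needs to be checked at $0$, where its order is $(\ell\bl-m^2)/24>0$ under \eqref{eq:lmcond}; thus $h_\ell\in S_{(\ell\bl-1)/2}(m,\Z)$ \emph{before} any $U_\ell$ is applied, and no chasing of $U_\ell$ through an Atkin--Lehner functional equation of $A_m$ is needed. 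The decisive step is then weight-lowering: $h_\ell\sl U_\ell\equiv h_\ell\sl T_\ell\pmod\ell$ keeps one at level $m$, and Lemma~\ref{lem:filt} gives $w_\ell(\bar{h_\ell\sl U_\ell})\equiv\frac{\ell\bl-1}{2}\pmod{\ell-1}$ together with $w_\ell(\bar{h_\ell\sl U_\ell})\le\frac{1}{\ell}\bigl(\frac{\ell\bl-3}{2}\bigr)+\ell$, whose unique admissible value is $(\ell+\bl-2)/2$. Your proposal never invokes $U_\ell\equiv T_\ell$ or any filtration estimate, and this is exactly where it breaks.

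Concretely, your weight adjustment cannot work as described. Your expression $\eta^{\bl}(mz)\,\bigl(A_m\eta^{m(\ell-1)}\bigr)\sl U_\ell\,/\,\eta^m(z)$ has weight $(m\ell-1-m+\bl)/2$, far above the target $(\ell+\bl-2)/2$, so the ``weight-raising unit'' would have to be raised to a \emph{negative} power; multiplying by holomorphic forms congruent to $1$ can only raise weight, and dividing by $\eta^\ell(z)/\eta(\ell z)$ introduces poles at cusps. Moreover $\eta^\ell(z)/\eta(\ell z)$ lives on $\Gamma_0(\ell)$, so any use of it pushes the level to $\ell m$, while the lemma requires level $m$, and you give no mechanism (trace, Hecke congruence, filtration) to descend; indeed a holomorphic form of weight $(\ell-1)/2$ on $\Gamma_0(m)$ congruent to $1\pmod\ell$ cannot exist, by Lemma~\ref{lem:filt}(1). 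There is also a multiplier problem: $A_m\eta^{m(\ell-1)}$ carries the multiplier $\bigl(\tfrac{\bullet}{m}\bigr)\nu^{m(\ell-1)}$, which is nontrivial unless $\bl=1$, so the standard level-$m$ Hecke and filtration machinery does not apply directly to your numerator; the paper's exponent $\ell\bl$ (note $\ell\bl\equiv1\pmod{24}$) is what makes the multiplier trivial. Finally, the claim that the resulting $q$-series ``coincides with an honest holomorphic modular form of the desired weight'' is precisely the statement to be proved, and without the filtration argument nothing in your construction delivers it.
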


\begin{proof} Define 
\begin{equation} \label{eq:hldef} h_\ell(z):=\frac{\eta^{\ell\bl}(mz)}{\eta^m(z)}A_m(z)
=\eta^{\ell\bl}(mz)\sum \cpm\pfrac{ n+m}{24}q^\frac n{24}
=q^\frac{(\ell\bl-1)m}{24}+\cdots .
\end{equation}
Using \eqref{eq:Amdef} and  a standard 
 criterion for eta-quotients (see e.g. \cite[Thm. 1.64]{ono_web}) we see that $h_\ell\in M^!_{(\ell\bl-1)/2}\(m,\Z\)$.  
 It is clear that $h_\ell$ vanishes at $\infty$; to check that it vanishes at the 
 other cusp of $\Gamma_0(m)$ we use the fact that $\eta(-1/z)=\sqrt{z/i}\,\eta(z)$ to see that 
 \begin{gather}
\frac{\eta^{\ell\bl}(mz)}{\eta^m(z)}\Big|_\frac{\ell\bl-m}2\pMatrix{0}{-1}{m}{0}
 =cq^\frac{\ell\bl-m^2}{24}+\cdots\ \ \ \ \text{with $c\neq 0$.}
 \end{gather}
 From \eqref{eq:lmcond}   it follows   that  $h_\ell\in S_{(\ell\bl-1)/2}\(m,\Z\)$.

By \eqref{eq:hldef} we have 
\begin{equation}
h_\ell\sl U_\ell\equiv \eta^{\bl}(mz)\sum \cpm\pfrac{\ell n+m}{24}q^\frac n{24}\pmod\ell.
\end{equation}
 It follows from Lemma~\ref{lem:filt} and the congruence $h_\ell\sl U_\ell\equiv h_\ell\sl T_\ell\pmod \ell$ that 
\begin{equation}
w\(\bar{h_\ell \sl U_\ell}\)\equiv \frac{\ell\bl-1}2\pmod{\ell-1}
\end{equation}
and that
\begin{equation}
w\(\bar{h_\ell \sl U_\ell}\)\leq \frac1\ell\(\frac{\ell\bl-3}2\)+\ell=\ell+\frac\bl2-\frac3{2\ell}.
\end{equation}
Note that  $(\ell+\bl-2)/2$ is the unique positive integer satisfying both of these conditions.  It follows that
there exists $f_\ell\in S_{(\ell+\bl-2)/2}\(m,\Z\)$ such that $f_\ell\equiv h_\ell\sl U_\ell\pmod\ell$, which   proves the lemma.
\end{proof}
To deduce Theorem~\ref{thm:Flconstruct} in this case it would suffice to show that the quotient $f_\ell/\eta^{\bl}(mz)$ is a cusp form.  Here there is a significant obstruction.  Since $\ell>m$   it is clear from Lemma~\ref{lem:flconstruct} that 
$f_\ell$ vanishes modulo $\ell$ to order $\geq\lceil\bl m/24\rceil$ at $\infty$, but we have no guarantee that $f_\ell$ itself vanishes to this order (there are many modular forms whose order of vanishing modulo $\ell$ is strictly greater than their order of vanishing).
To overcome this obstruction we use the following result.
\begin{proposition} \label{prop:ech} Let  $\ell, m\geq 5$ be  primes satisfying \eqref{eq:lmcond}. 
  Then there exist cusp forms $G_1, G_2, \dots,  G_{\left\lfloor\bl m/24\right\rfloor}\in S_{(\ell+\bl-2)/2}(m)$ with $\ell$-integral rational coefficients such that  
\begin{equation}\label{eq:echbasis}
G_j=q^j +O(q^{j+1}), \ \ \ \ j=1, \dots, \left\lfloor\frac{\bl m}{24}\right\rfloor.
\end{equation}
\end{proposition}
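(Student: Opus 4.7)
The plan is to reduce the problem to a dimension count on $X_0(m)$ handled by Riemann--Roch, and then address $\ell$-integrality. Write $k = (\ell+\bar\ell-2)/2$. For each $j \geq 1$, let $V_j \subseteq S_k(\Gamma_0(m))$ denote the subspace of cusp forms vanishing to order $\geq j$ at $\infty$. The existence of some $G_j = q^j + O(q^{j+1})$ is equivalent to $\dim V_j - \dim V_{j+1} = 1$; since this difference always lies in $\{0,1\}$, the proposition amounts to showing that no ``gap'' occurs in the order-of-vanishing sequence of $S_k(\Gamma_0(m))$ at $\infty$ before index $J+1$, where $J = \lfloor \bar\ell m/24 \rfloor$.

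The main step is to identify each $V_j$ with the global sections $H^0(X_0(m), L_j)$ of a line bundle $L_j$ of degree approximately $k(m+1)/12 - j$, with explicit correction terms coming from the two cusps of $\Gamma_0(m)$ and from any elliptic points of orders $2$ and $3$. Provided $\deg L_j \geq 2g_0(m) - 1$, Serre duality gives $H^1(X_0(m), L_j) = 0$ and Riemann--Roch yields $\dim V_j = \deg L_j + 1 - g_0(m)$, forcing $\dim V_j - \dim V_{j+1} = 1$ throughout that range. To verify $\deg L_J \geq 2g_0(m) - 1$ under \eqref{eq:lmcond}, one uses the standard estimate $g_0(m) = (m+1)/12 + O(1)$ together with the inequality $k > m^2/(2\bar\ell)$ implied by $\ell\bar\ell > m^2$: the result reduces to an elementary estimate in $\ell, \bar\ell, m$ which holds by a short calculation.

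For the $\ell$-integrality, since $\ell \geq 5$ is coprime to $m$, the modular curve $X_0(m)$ has good reduction at $\ell$ and the sheaf of cusp forms carries a compatible integral structure, so the entire Riemann--Roch computation descends to $\Z_{(\ell)}$ and then to $\mathbb{F}_\ell$. In particular, the characteristic-zero dimension jump $\dim V_j - \dim V_{j+1} = 1$ is mirrored mod $\ell$, producing a form in $V_j \otimes \Z_{(\ell)}$ whose leading $q$-coefficient is an $\ell$-adic unit; normalizing this coefficient to $1$ yields the desired $G_j$ with $\ell$-integral rational coefficients. The main obstacles I anticipate are, first, carrying out the Riemann--Roch inequality sharply enough to cover all pairs $(\ell, m)$ admitted by \eqref{eq:lmcond}, carefully tracking the contributions of elliptic points and the cusp at $0$; and second, handling by direct computation the small-$m$ exceptions of \eqref{eq:smalllmcond}, where the hypothesis $\ell\bar\ell > m^2$ is relaxed, and the echelon forms must instead be exhibited explicitly, e.g.\ via eta-quotients $\eta^a(z)\eta^b(mz)$ (varying $a,b$ with $a+b=2k$) and oldforms pulled back from level $1$ via the $V_m$ operator.
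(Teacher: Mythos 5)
Your route is genuinely different from the paper's, and in outline it is viable. The paper proves the proposition by exhibiting the echelon forms explicitly: for $\bar\ell=1$ or $m>523$ it uses an echelon basis of level-one cusp forms of weight $(\ell+\bar\ell-2)/2$; for $37\leq m\leq 523$ it takes $k$-fold products of the weight-$2$ Eisenstein series $E\in M_2(m)$ with an echelon basis $H_1,\dots,H_g$ of $S_2(m)$, after verifying in Sage that the leading coefficients $a_1\cdots a_g$ (or $a_1\cdots a_{g-2}$) are not divisible by any $\ell$ with $\ell\bar\ell>m^2$, together with the genus inequality $gk>\bar\ell m/24$; and for $m\leq 31$ it computes bases directly. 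You replace all of this by Riemann--Roch on $X_0(m)$ plus an integral refinement, which, if carried out, avoids computation entirely and is uniform in $(\ell,m)$. Two points carry the real weight in your plan. First, the degree inequality: with $k=(\ell+\bar\ell-2)/2\geq m-1$ (AM--GM from \eqref{eq:lmcond}) and $\deg L_0\geq k(m+1)/12-O(1)$, the requirement $\deg L_{J+1}\geq 2g-1$ (note you need it one step past $J=\lfloor \bar\ell m/24\rfloor$ to get the jump at $j=J$) follows from a crude estimate only for $m\geq 19$ or so; the finitely many smaller $m$ do satisfy it, but by a finer check rather than the single "short calculation'' you assert. Second, the $\ell$-integrality is exactly the issue that forced the paper into computation, and your appeal to "the Riemann--Roch computation descends to $\Z_{(\ell)}$'' is where the substantive work lies: one needs the smooth model of $X_0(m)$ over $\Z_{(\ell)}$ (Igusa, using $\ell\nmid m$), the Katz-style integral line bundle of weight-$k$ cusp forms on the coarse space (which behaves well under reduction because the elliptic and cusp automorphism orders are invertible for $\ell\geq 5$), vanishing of $H^1$ on both fibers (degrees agree by flatness), cohomology-and-base-change, and the $q$-expansion principle at the width-one cusp $\infty$ to conclude that the $j$-th-coefficient map $V_j(\Z_{(\ell)})\to\Z_{(\ell)}$ is surjective, after which one normalizes by the unit leading coefficient. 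This can all be made rigorous, so I count it as a different and machinery-heavier but computation-free proof; be aware, though, that as written these steps are asserted rather than proved. Finally, your last sentence about the exceptions in \eqref{eq:smalllmcond} is outside the scope of this proposition, which assumes \eqref{eq:lmcond}; in the paper those cases are handled separately, in the proof of Theorem~\ref{thm:Flconstruct}, not via Proposition~\ref{prop:ech}.
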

Before proving the proposition we show that Theorem~\ref{thm:Flconstruct}
is a consequence in the case that \eqref{eq:lmcond} holds.
\begin{proof}[Proof of Theorem~\ref{thm:Flconstruct} when $\ell\bl>m^2$]

Let $f_\ell\in S_{(\ell+\bl-2)/2}(m, \Z)$ be the form from Lemma~\ref{lem:flconstruct}; by the discussion above   
 we have 
\begin{equation} f_\ell=a_1q+a_2q^2+\dots, \ \ \ \ \text{where $a_j\equiv 0\pmod\ell$ \ \  \ if \ \ \ $j\leq \left\lfloor\frac{\bl m}{24}\right\rfloor$}.
\end{equation}
We replace $f_\ell$ by 
\begin{equation}
f_\ell':=f_\ell-a_1G_1-\dots -a_{\lfloor\frac{\bl m}{24}\rfloor}G_{\lfloor\frac{\bl m}{24}\rfloor}\equiv f_\ell\pmod\ell.
\end{equation}
Then $f_\ell'$ vanishes to order $>\bl m/24$ at $\infty$, and vanishes to order $\geq 1$ at $0$.
Since $\eta^{\bl}(mz)$ has order $\bl m/24$ at $\infty$ and  order $\bl/24<1$ at $0$,
it follows that $f_\ell'(z)/\eta^{\bl}(mz)$ is a cusp form.  
The theorem follows in this case since 
by \cite[Cor. 3.5]{AAD} we have $\eta^{\bl}(mz)\in S_{\bl/2}\(m,\(\frac{\bullet}{m}\)\nu^{m\ell}\)$.
\end{proof}

The next lemma shows that Proposition~\ref{prop:ech} is not difficult  to prove  when $m$ is sufficiently large using modular forms of level one. 
For the remaining values of $m$, we rely on a variety of computations. 
\begin{lemma}\label{lem:finitem}
Proposition~\ref{prop:ech} is true if $\bl=1$ or if $m>523$.
\end{lemma}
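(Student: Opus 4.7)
The plan is to construct all of the required $G_j$ as cusp forms on the full modular group; any $f\in S_k(\SL_2(\Z))$ automatically lies in $S_k(\Gamma_0(m))$ and vanishes at every cusp of $\Gamma_0(m)$, so it suffices to exhibit such forms with leading expansions $q^j$ for $j=1,\dots,\lfloor\bl m/24\rfloor$, where $k=(\ell+\bl-2)/2$.

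Writing $\ell=\bl+24s$ yields $k\equiv \bl-1\pmod{12}$. Since $\ell\geq 5$ is prime, $\gcd(\bl,24)=1$, so $\bl\in\{1,5,7,11,13,17,19,23\}$ and $k\bmod 12\in\{0,4,6,10\}$; in particular $k\not\equiv 2\pmod{12}$. Consequently, for each $1\leq j\leq\lfloor k/12\rfloor$ there exist non-negative integers $a_j,b_j$ with $4a_j+6b_j=k-12j$, and the product
\[
G_j := \Delta^j E_4^{a_j} E_6^{b_j} \in S_k(\SL_2(\Z))
\]
has integer Fourier coefficients with leading expansion $q^j + O(q^{j+1})$ (since $\Delta=q+O(q^2)$ and $E_4,E_6$ are normalized to start with $1$). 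In particular each $G_j$ is $\ell$-integral, and no further row reduction is needed.

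It remains to verify the dimension bound $\lfloor k/12\rfloor\geq\lfloor\bl m/24\rfloor$. When $\bl=1$, this reduces to $\lfloor(\ell-1)/24\rfloor\geq\lfloor m/24\rfloor$, immediate from $\ell>m^2\geq m$. When $m>523$, the hypothesis $\ell\bl>m^2$ gives $\ell\geq m^2/\bl$, and it suffices to check $m^2\geq\bl^2(m-1)+2\bl$; the worst case $\bl=23$ reduces to $m^2-529m+483\geq 0$, whose larger root is approximately $528.08$, so every integer $m\geq 529$ works. Since the smallest prime above $523$ is $541$, every prime $m>523$ satisfies the inequality.

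The argument involves no essential obstacle beyond elementary dimension counting. The two points meriting a touch of care are verifying that $k$ never hits the exceptional residue class $2\pmod{12}$ (so that the monomials $\Delta^j E_4^{a_j}E_6^{b_j}$ exist for all $j\leq\lfloor k/12\rfloor$ and span a space of the expected dimension), and confirming that the cutoff $m>523$ is forced precisely by the worst-case branch $\bl=23$ of the quadratic dimension inequality; for $m\leq 523$ with $\bl\neq 1$ the level-one supply of cusp forms is insufficient, and one must turn to the level-$m$ computations carried out in the remainder of Section~\ref{sec:construct}.
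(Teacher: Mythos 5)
Your proof is correct and follows essentially the same route as the paper: reduce to level-one cusp forms, which automatically live in $S_{(\ell+\bl-2)/2}(m)$ and vanish at both cusps, and then verify via the hypothesis $\ell\bl>m^2$ that $\dim S_{(\ell+\bl-2)/2}(1)=\lfloor(\ell+\bl-2)/24\rfloor$ is at least $\lfloor \bl m/24\rfloor$, with $\bl=23$ as the worst case and primality of $m$ bridging the small numerical gap above $523$. The only cosmetic difference is that you exhibit the echelon basis explicitly as $\Delta^jE_4^{a_j}E_6^{b_j}$ rather than citing the standard integral echelon basis of $S_k(1)$, and your sufficient inequality $m^2\geq\bl^2(m-1)+2\bl$ is a slightly different (but, as one checks, valid) packaging of the paper's inequality.
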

\begin{proof}  
Since 
\begin{equation*}
    \frac{\ell+\bl-2}2\equiv \ell-1\not\equiv 2\pmod{12}
\end{equation*}
we have
\begin{equation}
\dim S_\frac{\ell+\bl-2}2(1)=\left\lfloor\frac{\ell+\bl-2}{24}\right\rfloor.
\end{equation}
Defining $\ell^*\in \{1, \dots, 11\}$ by  $\ell^* \equiv \ell\pmod{12}$ we find that 
\begin{equation}
\dim S_\frac{\ell+\bl-2}2(1)=\frac{\ell+\bl-2\ell^*}{24}.
\end{equation}
If $\dim S_k(1)=d$ then $S_k(1)$ has a basis $\{G_{j}\}_{j=1}^d$  with 
$G_j=q^j+O(q^{j+1})\in \Z[\![q]\!]$.
So the lemma will follow assuming that we have the inequality
\begin{equation}
\label{eq:weightineq}
\frac{\ell+\bl-2\ell^*}{24}\geq \frac{\bl m-1}{24}.
\end{equation}
By assumption we have $\ell\geq (m^2+24)/\bl$, so \eqref{eq:weightineq} will be true if
\begin{equation}
\label{eq:weightineq1}
m^2-\bl^2m +\bl^2+24-\bl(2\ell^*-1)\geq 0.
\end{equation}
It is easily checked that \eqref{eq:weightineq1} holds for all $m$ if $\bl=1$ and that it holds for $m>523$ for any value of $\bl$.
This proves the lemma.
\end{proof}

We next consider  small values of  $m$.
\begin{lemma}  Proposition~\ref{prop:ech} holds for $5\leq m\leq 31$.
\end{lemma}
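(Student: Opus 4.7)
The plan is to reduce the problem to finitely many base cases via a bootstrap along each arithmetic progression modulo $24$. By Lemma~\ref{lem:finitem} I may assume $\bl \neq 1$, so there are at most $7 \cdot 9 = 63$ pairs $(m, \bl)$ to consider, with $m \in \{5, 7, 11, 13, 17, 19, 23, 29, 31\}$ and $\bl \in \{5, 7, 11, 13, 17, 19, 23\}$.

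For each such pair, let $\ell_0 = \ell_0(m, \bl)$ be the smallest prime $\equiv \bl \pmod{24}$ with $\ell_0 \bl > m^2$, and set $k_0 := (\ell_0 + \bl - 2)/2$ and $d := \lfloor \bl m/24 \rfloor$. The base case consists of producing cusp forms $G_1^{(0)}, \dots, G_d^{(0)} \in S_{k_0}(\Gamma_0(m), \Z_{(\ell_0)})$ with $G_j^{(0)} = q^j + O(q^{j+1})$, which I would verify in a computer algebra system: compute an integral basis of $S_{k_0}(\Gamma_0(m), \Z)$, and reduce the matrix of its first $d$ Fourier coefficients to Hermite normal form. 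Since $\dim S_{k_0}(\Gamma_0(m))$ comfortably exceeds $d$ in every one of these cases (the dimension grows like $k_0(m+1)/12$, while $d \le \bl m/24 \le 23 \cdot 31/24$), the main point is to check that the leading $d \times d$ minor of the Hermite form has unit pivots at every prime $\ell$ relevant to the proposition.

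To handle any larger prime $\ell \equiv \bl \pmod{24}$ with $\ell \bl > m^2$, write $\ell = \ell_0 + 24 t$ for some $t \geq 1$, so that the target weight is $k = k_0 + 12 t$. Let $E_4 = 1 + 240 \sum \sigma_3(n) q^n \in M_4(\SL_2(\Z), \Z)$ be the normalized weight-$4$ Eisenstein series, and define $G_j := E_4^{3t} G_j^{(0)}$. Since $E_4^{3t}$ has integer coefficients and constant term $1$, each $G_j$ lies in $S_k(\Gamma_0(m), \Z_{(\ell)})$, retains $G_j = q^j + O(q^{j+1})$, and is $\ell$-integral, as required. In this way a single finite verification at $(m, \bl, \ell_0)$ handles every prime in the associated progression at once.

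The principal obstacle is the base-case verification: beyond the computational volume, one must rule out the possibility that some Hermite pivot at $(m, \bl, \ell_0)$ is divisible by a prime $\ell$ belonging to the corresponding arithmetic progression, because then the bootstrap would fail to produce an $\ell$-integral echelon basis at that specific $\ell$. In any such exceptional case one falls back to an ad hoc construction at that $(m, \ell)$, for example by diagonalizing the first-$d$-coefficient map directly over $S_k(\Gamma_0(m), \F_\ell)$ and lifting to $\Z_{(\ell)}$ by Nakayama.
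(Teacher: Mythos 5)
Your proposal is essentially the paper's argument: establish echelon bases $q^j+O(q^{j+1})$ by a finite computer verification for each pair $(m,\bl)$, then multiply by an integral power of $E_4$ (constant term $1$, weight divisible by $12$) to reach weight $(\ell+\bl-2)/2$ for every larger prime $\ell\equiv\bl\pmod{24}$, which is exactly the paper's bootstrap. The only real difference is where the base computation sits: the paper computes integral echelon bases of $S_{\ell-1}(m)$ for the primes $5\le\ell\le 23$ (so the base weight is $\bl-1\le 22$, the computations are light, and $\ell$-integrality is automatic for every prime in the progression, with no pivot/fallback analysis needed), whereas you base at the first admissible prime $\ell_0$ in each progression, which is correct but computationally heavier.
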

\begin{proof}
Assume that $5\leq m\leq 31$.     For  $5\leq \ell\leq 23$ we compute explicit bases in  Sage \cite{sagemath} to see that the proposition holds for $S_{(\ell+\bl-2)/2}(m)=S_{\ell-1}(m)$.
    For   $\ell>23$ we may assume by  Lemma~\ref{lem:finitem} 
that $5\leq \bl\leq 23$.  We have just shown that 
  $S_{\bl-1}(m)$ has a basis of the form \eqref{eq:echbasis};   multiplying each element of this basis by $E_4^{(\ell-\bl)/8}$   (where $E_4$ is the Eisenstein series of weight $4$ on $\SL_2(\Z)$)  produces the desired basis for 
  $S_{(\ell+\bl-2)/2}(m)$.
\end{proof}

\begin{proof}[Proof of Proposition~\ref{prop:ech}]

After the two lemmas it remains to consider  $m$ with $37\leq m\leq 523$.
Let $g$ be the genus of $X_0(m)$.
We first assume that  $37\leq m\leq 269$; for these $m$ we check using Sage that $S_2(m)$
has a basis of forms with integer coefficients of the form
\begin{equation}\label{eq:s2basis}
H_1=a_1q+O(q^2), \ \ H_2=a_2q^2+O(q^3), \dots, H_g=a_gq^g+O(q^{g+1}),
\end{equation}
and with the property that 
\begin{equation}
    \ell\bl>m^2\implies \ell\nmid a_1 a_2\cdots a_g.
\end{equation}
Let $E=(m-1)+24q+\dots\in M_2(m)$ be the Eisenstein series with integer coefficients (note that  $\ell\nmid(m-1)$ by \eqref{eq:lmcond}).
Write $2k=(\ell+\bl-2)/2$.  By taking $k$-fold products of appropriate elements of the set 
\begin{equation}
    \{E, H_1, H_2, \dots, H_g\}
\end{equation}
we can produce forms in $S_{(\ell+\bl-2)/2}\(m,\Z\)$  of the form
\begin{equation}\label{eq:goodbasis}
G_1=b_1q+O(q^2), \ \ G_2=b_2q^2+O(q^3), \ \cdots, G_{gk}=b_{gk}q^{gk}+O(q^{gk+1}),
\end{equation}
and with the property that 
\begin{equation}\label{eq:s2int}
    \ell\bl>m^2\implies \ell\nmid b_1 b_2\cdots b_{gk}.
\end{equation}
The proposition will  follow from \eqref{eq:goodbasis} and \eqref{eq:s2int} for this range of $m$ assuming that we have the inequality
\begin{equation}
    \label{eq:genusineq}
    gk>\frac{\bl m}{24}.
\end{equation}
We have $g=\dim S_2(m)=\dim S_{m+1}(1)\geq (m-13)/12$
and 
\begin{equation}
    k=\frac{\ell+\bl-2}4\geq \frac{2\sqrt{\ell\bl}-2}4\geq\frac{\sqrt{m^2+24}-1}2.
\end{equation}
Since $\bl\leq 23$ we see that \eqref{eq:genusineq} is implied by 
\begin{equation}
    (m-13)(\sqrt{m^2+24}-1)>23m.
\end{equation}
The last inequality holds if $m>36.5$, which establishes the proposition for
$37\leq m\leq 269$.

Finally we assume that $271\leq m\leq 523$.  
For these $m$ we check using Sage that there is a basis of the form
\eqref{eq:s2basis} 
with the property that 
\begin{equation}\label{eq:s2int1}
    \ell\bl>m^2\implies \ell\nmid a_1 a_2\cdots a_{g-2}.
\end{equation}
We may now take $k$-fold products of elements of the set
\begin{equation}
    \{E, H_1, H_2, \dots, H_{g-2}\},
\end{equation}
and the proposition will follow if we have the inequality
\begin{equation}
    \label{eq:genusineq1}
    (g-2)k>\frac{\bl m}{24}.
\end{equation}
Arguing as above we see that this holds if $m>60.4$.  This proves the proposition.
\end{proof}

It remains to prove  Theorem~\ref{thm:Flconstruct} under the assumption \eqref{eq:smalllmcond}; for this we use explicit computations for each $m$ to address those values of $\ell$ which are not covered by \eqref{eq:lmcond}.

When $m\in \{5, 7\}$ there is nothing to prove in view of  \eqref{eq:cpmram} and  \eqref{eq:lmcond}.
For $m=11$ it suffices to prove Theorem~\ref{thm:Flconstruct} for $\ell\in\{73, 97\}$, and
for $m=13$ it suffices to prove the result for $\ell\in \{7, 11, 29, 73, 97\}$.  
For each of these cases we  verify by direct computation that 
there is a cusp form $f_\ell\in S_{(\ell+\bl)/2}(m)$ satisfying the conclusion of Lemma~\ref{lem:flconstruct} 
and such that $f_\ell$ vanishes to order $>\bl m/24$ at $
\infty$.  Dividing $f_\ell$ by $\eta^{\bl}(mz)$ then shows that Theorem~\ref{thm:Flconstruct} holds.

First suppose that $m=11$.  
When $\ell=73$ we find using Sage a basis $h_i=q^i+\dots$, $1\leq i\leq 34$ for $S_{36}(11)$ of modular forms with integer coefficients and verify (recalling \eqref{eq:cpmram}) that there are integers $\alpha_i$ with 
\begin{equation}
\eta(11z)\sum c\phi_{11}\pfrac{73n+11}{24}q^\frac n{24}\equiv \sum_{i=1}^{34}\alpha_i h_i\pmod{73}.
\end{equation}

When $\ell=97$ we find a basis $h_i$, $1\leq i\leq 46$ for $S_{48}(11)$ of modular forms with integer coefficients and verify that there are integers $\beta_i$ with 
\begin{equation}
\eta(11z)\sum c\phi_{11}\pfrac{97n+11}{24}q^\frac n{24}\equiv \sum_{i=1}^{46}\beta_i h_i\pmod{97}.
\end{equation}

Finally suppose that $m=13$.  In this case we have the  formula  \cite[(1.16)]{Chan-Wang-Yang} 
\begin{equation}\label{eq:cp13}
c\phi_{13}(n)=p(n/13)+13 p(13n-7)+26 a(n),\qquad\text{where}\qquad\sum_{n=0}^\infty a(n)q^n:= q\prod_{n=1}^\infty\frac{1-q^{13n}}{(1-q^n)^2}.
\end{equation}
We discuss only the cases $\ell=29$ and $97$ since the others are similar.
When $\ell=29$ we construct a basis for the $17$-dimensional $S_{16}(13)$ as follows:  There is a cusp form $h\in S_4(13)$ with integer coefficients which
is  identified by its expansion $h=q^2 - 3q^3 + q^4+\cdots$.
For $1\leq i\leq 16$ define $h_i\in S_{16}(13)$ by 
\begin{equation}\label{eq:weight16basis}
h_i:=\eta(13z)^{2 i - 6} \eta(z)^{30 - 2 i} h=q^i+\cdots.
\end{equation}
We find that there are integers $\alpha_i$ with 
\begin{equation}
\eta^5(13z)\sum c\phi_{13}\pfrac{29n+13}{24}q^\frac n{24}\equiv \sum_{i=3}^{16}\alpha_i h_i\pmod{29},
\end{equation}
and the desired result follows.

When $\ell=97$ we construct a basis for the $55$-dimensional $S_{48}(13)$ as follows:  
For $1\leq i\leq 55$ define $h_i\in S_{48}(13)$ by 
\begin{equation}\label{eq:weight48basis}
h_i:=\eta(13z)^{2 i - 8} \eta(z)^{104 - 2 i}=q^i+\cdots.
\end{equation}
We find that there are integers $\beta_i$ with 
\begin{equation}
\eta(13z)\sum c\phi_{13}\pfrac{97n+13}{24}q^\frac n{24}\equiv \sum_{i=1}^{55}\beta_i h_i\pmod{97}
\end{equation}
and the desired result follows.
This finishes the proof of Theorem~\ref{thm:Flconstruct}.

\begin{remark}
We  note that  there is no form in  in the 3-dimensional space $S_{4}(13)$ which is congruent modulo $5$ 
to 
\begin{equation}
\eta^5(13z)\sum c\phi_{13}\pfrac{5n+13}{24}q^\frac n{24}
\equiv 2q^3+2q^5+\cdots\pmod 5.
\end{equation}
So Theorem~\ref{thm:Flconstruct} does not hold when $m=13$ and $\ell=5$.
\begin{equation}
\end{equation}
\end{remark}

\section{Proof of Theorem~\ref{thm:frob1} and Theorem~\ref{thm:frob2}}
These theorems rely on the modular forms constructed in Theorem~\ref{thm:Flconstruct} together with the results 
of the recent paper \cite{AAD}, which give precise arithmetic information about the 
properties of the Shimura lifts of these forms.  To apply these results we need to prove that all Galois representations attached 
to newforms in the relevant integral weight spaces have large image modulo $\ell$ (see the definition below). 
Most of the work in this section involves proving that this occurs (this is similar to the proof  of \cite[~Proposition $3.3$]{Ahlgren-Allen-Tang}). 
We briefly describe the necessary notation; for full details see \cite[\S 3.1, \S 7.1]{AAD}.

 We assume  that $N$ is a positive square-free integer with $(N, 6)=1$, that   $\lambda\geq 2$ is an integer and that $\ell\geq 5$ is a prime with $\ell\nmid N$. We also assume that
  $\psi$ is a real Dirichlet character modulo $N$ and that $r$ is an integer with $(r, 6)=1$. 

Let $\bar{\Q}$ be the algebraic closure of $\Q$ in $\C$. For each prime $p$, fix an algebraic closure $\bar{\Q}_p$ of $\Q_p$ and an embedding $\iota_p:\bar{\Q}\rightarrow\bar{\Q}_p$.
For finite extensions $K/\Q$, let $G_K:=\Gal(\bar{\Q}/K)$.
The embedding $\iota_p$ allows us to view $G_p:=\Gal(\bar{\Q}_p/\Q_p)$ as a subgroup of $G_\Q$ and determines a prime above $\ell$ in each finite extension $K/\Q$. If $I_p$ is the inertia subgroup of $G_p$, then we denote the arithmetic Frobenius in $G_p/I_p$ by $\text{Frob}_p$. Let $\omega:G_\Q \rightarrow \F_\ell^\times$ denote the mod $\ell$ cyclotomic character.

If $\ep_2, \ep_3\in \{\pm 1\}$
let $S^{\new 2,3}_{2\lambda}(6N,\ep_2,\ep_3)$ be the 
space of  cusp forms of weight $2\lambda$ on $\Gamma_0(N)$ which are new at $2$ and $3$ and which have Atkin-Lehner eigenvalues $\ep_2$ and $\ep_3$ at $2$ and $3$ respectively.
Let  $f=\sum a(n)q^n\in S^{\new 2,3}_{2\lambda}(6N,\ep_2,\ep_3)$ be a newform (which necessarily has level $6N'$ for some $N'\mid N$)
and denote by 
$\bar{\rho}_{f}:G_{\Q} \rightarrow \GL_2(\bar{\mathbb{F}}_\ell )$
its associated residual Galois representation. We have $\text{tr} \ \bar{\rho}_f(\text{Frob}_p) \equiv a(p) \pmod{\lambda}$, where $\lambda$ is the prime above $\ell$ in the coefficient field of $f$.

Define
\begin{equation}\label{eq:epdef}
\ep_{p, r, \psi}:=-\psi(p)\pfrac{4p}{r}\ \ \ \ p\in \{2, 3\}.
\end{equation}
We recall a technical definition from \cite[\S 1]{AAD}.
\begin{definition}
With assumptions as above, we  say that the pair $(2\lambda,\ell)$ is {\it suitable} for the triple $(N,\psi,r)$ if for every newform $f \in S^{\new 2,3}_{2\lambda}(6N,\ep_{2,r,\psi},\ep_{3,r,\psi})$, the image of $\bar{\rho}_f$ contains a conjugate of 
$\SL_2(\mathbb{F}_\ell)$.
\end{definition}

The following is a consequence of  \cite[Thm. 1.3]{AAD}.
\begin{theorem}\label{thm:cong1}
With assumptions as above, 
suppose that 
\begin{equation}
F(z) =\displaystyle \sum_{n \equiv r \spmod{24}} a(n)q^{\frac{n}{24}} \in S_{\lambda+\frac{1}2}(N, \psi\nu^{r})
\end{equation}
has $\ell$-integral rational coefficients 
and that $(2\lambda,\ell)$ is suitable for $(N,\psi,r)$.
Then there is a positive density set $S$ of primes such that if $p \in S$, then $p \equiv 1 \pmod{\ell} $ and
\begin{equation}
a(p^2n) \equiv 0 \pmod{\ell}  \ \ \ \ \text{ if } \ \ \ \(\frac{n}{p}\)=
\(\frac{-1}{p}\)^{\frac{r-1}{2}}\psi(p).
\end{equation}
\end{theorem}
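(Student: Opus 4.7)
The plan is to reduce Theorem~\ref{thm:cong1} to \cite[Thm.~1.3]{AAD}, whose hypotheses package precisely the data provided here. The strategy has three phases: apply the precise Shimura lift developed in \cite{AAD} to $F$ to produce a companion integer-weight cusp form $\tilde F$ of weight $2\lambda$, use the suitability assumption to control the mod-$\ell$ Hecke eigensystems carried by $\tilde F$, and then invoke the Chebotarev density theorem to locate primes $p$ with the prescribed local behavior.

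First, I would feed $F$ into the Shimura lift of \cite{AAD}. Because $F$ has weight $\lambda+\tfrac12$, multiplier $\psi\nu^r$ on $\Gamma_0(N)$, and Fourier support concentrated on exponents $n \equiv r \pmod{24}$, the lift lands inside $S_{2\lambda}(6N)$ (recall $\psi^2=1$), with its new-at-$2$-and-$3$ part carrying Atkin-Lehner eigenvalues $\ep_{2,r,\psi}$ and $\ep_{3,r,\psi}$ as in~\eqref{eq:epdef}. The explicit formulas relating coefficients of $\tilde F$ to coefficients of $F$ then translate the congruence $a(p^2 n) \equiv 0 \pmod\ell$ on the claimed arithmetic progression into the single congruence $\tilde F \sl T_p \equiv 0 \pmod\ell$, \emph{provided} the Euler-factor contribution involving $p^{2\lambda-1}a(p)$ is killed; this is achieved by insisting $p \equiv 1 \pmod\ell$.

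Second, I would locate primes with $\tilde F \sl T_p \equiv 0 \pmod\ell$. Decomposing the mod-$\ell$ image of $\tilde F$ as a sum of Hecke eigensystems attached to newforms $f_1,\dots,f_s \in S^{\new 2,3}_{2\lambda}(6N,\ep_{2,r,\psi},\ep_{3,r,\psi})$, the problem becomes that of finding $p$ satisfying $\operatorname{tr}\bar\rho_{f_i}(\Frob_p) \equiv 0$ for every $i$. The suitability hypothesis guarantees that each image $\bar\rho_{f_i}(G_\Q)$ contains a conjugate of $\SL_2(\F_\ell)$. Working inside the compositum of the fixed fields of the $\bar\rho_{f_i}$ and of the mod-$\ell$ cyclotomic character $\omega$, Chebotarev then produces a positive density set of primes $p$ for which each $\bar\rho_{f_i}(\Frob_p)$ is simultaneously a trace-zero element of $\SL_2(\F_\ell)$ and $\omega(\Frob_p)=1$, i.e.\ $p \equiv 1 \pmod\ell$. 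That such simultaneous Frobenius classes are nonempty requires a Goursat-style independence check of the various projections, which is standard once the large-image hypothesis is in hand.

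The hard part will be the bookkeeping in the first phase: verifying that the twisting character appearing in the Shimura-lift identity is exactly $\left(\frac{-1}{p}\right)^{(r-1)/2}\psi(p)$, so that the quadratic-residue condition on $n$ in the conclusion matches the one produced by the lift. The exponent $(r-1)/2$ encodes the parity contribution of $\nu^r$, and tracking it through the construction of \cite{AAD} is where the precision of the recent lift theorem is essential. Once this translation is pinned down and suitability is invoked, the Chebotarev step is routine and yields the positive density set $S$ asserted in the theorem.
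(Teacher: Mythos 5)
Your top-level move is exactly what the paper does: Theorem~\ref{thm:cong1} is stated in the paper as a direct consequence of \cite[Thm.~1.3]{AAD}, whose hypotheses ($F\in S_{\lambda+\frac12}(N,\psi\nu^r)$ with $\ell$-integral rational coefficients, support on exponents $n\equiv r\pmod{24}$, and suitability of $(2\lambda,\ell)$ for $(N,\psi,r)$) are precisely those assumed here; the paper offers no further argument. As a citation-level reduction, your proposal therefore matches the paper.

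However, the mechanism you sketch inside that citation is not correct, so if you intend it as an actual derivation there is a genuine gap. The desired conclusion does not translate into $\tilde F\,|\,T_p\equiv 0\pmod\ell$, and the Chebotarev target is not trace-zero Frobenius. For $p\nmid n$ the half-integral-weight Hecke relation gives
\begin{equation}
a(p^2n)\equiv\Bigl(a_{f}(p)-\psi^{*}(p)\Bigl(\tfrac np\Bigr)p^{\lambda-1}\Bigr)a(n)\pmod\ell,
\end{equation}
where $\psi^{*}$ absorbs $\psi$ and the power of $\bigl(\tfrac{-1}{\cdot}\bigr)$ coming from $\nu^{r}$. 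So one needs the eigenvalues $a_{f_i}(p)$ to be congruent to the middle Euler-factor term $\psi^{*}(p)\,\epsilon_0\,p^{\lambda-1}$ for the prescribed sign $\epsilon_0$ of $\bigl(\tfrac np\bigr)$; once $p\equiv 1\pmod\ell$ (which normalizes $p^{\lambda-1}\equiv 1$ rather than ``killing'' any Euler factor --- the $p^{2\lambda-1}a(n/p^2)$ term already vanishes because $p\nmid n$), this forces $\operatorname{tr}\bar\rho_{f_i}(\Frob_p)\equiv\pm1\pmod\ell$, not $0$. Trace-zero Frobenius would instead yield $a(p^2n)\equiv-\psi^{*}(p)\bigl(\tfrac np\bigr)p^{\lambda-1}a(n)$, which does not vanish on the stated progression; that condition is what produces Ono/Treneer-type congruences such as \eqref{eq:treneerfrob}, not Atkin-type congruences with a one-sided Legendre condition. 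Producing primes whose Frobenius simultaneously realizes the correct (nonzero) trace condition for all the relevant newforms, using the large-image (suitability) hypothesis, is exactly the content of \cite[Thm.~1.3]{AAD}, which is why the paper simply invokes it.
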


\begin{remark}
See the remark after 
Theorem~\ref{thm:frob2} for  a discussion
of what is meant by positive density.
\end{remark}

\begin{proof}[Proof of Theorem~\ref{thm:frob1}]
Let $\ell, m \geq 5$ be distinct  primes satisfying 
 \eqref{eq:lmcond} or \eqref{eq:smalllmcond}. 
 When $\ell, m\leq 11$  the result is trivial in view of \eqref{eq:cpramcong}. Thus, we will always assume that one of $\ell$ or $m$ is at least 13.
 By Theorem~\ref{thm:Flconstruct}
there exists $F_\ell\in S_{(\ell-2)/2}(m,\(\frac{\bullet}{m}\)\nu^{-m\ell})$ such that
\begin{equation}\label{eq:flrecall}
F_\ell(z)\equiv \sum \cpm\pfrac{\ell n+m}{24}q^\frac n{24}\pmod\ell.
\end{equation}
Therefore, Theorem~\ref{thm:frob1} for the pair ($\ell, m$) will follow from Theorem~\ref{thm:cong1} if we can verify that
\begin{equation}\label{eq:suitcheck}
    (\ell-3, \ell) \ \ \ \text{is suitable for}\ \ \ \(m, \(\tfrac{\bullet}{m}\), -m\ell\).
    \end{equation}
The next lemma  reduces us to a finite computation.
\begin{lemma}\label{lem:suit}
    Let $\ell, m \geq 5$ be distinct primes satisfying 
 \eqref{eq:lmcond} or \eqref{eq:smalllmcond}.  Then \eqref{eq:suitcheck} holds  with only these possible exceptions $(\ell, m)$:
 \begin{align*}
    \ell=7: \quad& m= 13,\\
    \ell=13: \quad&m=5, 7, 11,\\
    \ell=19: \quad&m=5, 7, 11, 13, 17.
\end{align*}
\end{lemma}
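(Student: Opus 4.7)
My plan follows the strategy of \cite[Prop.~3.3]{Ahlgren-Allen-Tang}: combine Dickson's classification of subgroups of $\GL_2(\F_\ell)$ with explicit computation in Sage for the finitely many remaining small cases. Fix a newform $f \in S^{\new 2,3}_{\ell-3}(6m, \ep_{2,r,\psi}, \ep_{3,r,\psi})$ with $r = -m\ell$ and $\psi = (\tfrac{\bullet}{m})$. If the image of $\bar\rho_f$ does not contain a conjugate of $\SL_2(\F_\ell)$, then by Dickson's theorem its projective image is either (a) contained in a Borel subgroup, (b) contained in the normalizer of a split or non-split Cartan, or (c) isomorphic to $A_4$, $S_4$, or $A_5$. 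I would rule out each of these possibilities for all but a bounded set of pairs $(\ell,m)$.

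In case (a), $f$ satisfies an Eisenstein congruence modulo $\ell$. The weight $\ell-3$, squarefree level $6m$ coprime to $\ell$, and character $\psi$ restrict the candidate Eisenstein series to those labelled by divisors of $6m$ together with a pair of Dirichlet characters modulo $6m$; Deligne's bound $|a(p)| \le 2 p^{(\ell-4)/2}$ applied at small primes $p\nmid 6m\ell$ then contradicts the explicit Eisenstein coefficients once $\ell$ exceeds an explicit bound in $m$. In case (b), $f$ admits an inner twist by a quadratic character $\chi$ whose conductor divides $6m\ell$; enumerating such $\chi$ and using that $a(p)\equiv 0\pmod\ell$ on the primes inert in the fixed quadratic field, combined with the dimension of the new subspace, yields a contradiction once $\ell\bar\ell>m^2$ is sufficiently large. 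In case (c), the projective image has order dividing $120$, so $a(p)^2/(p^{\ell-4}\psi(p))\pmod\ell$ takes values in a finite set determined by the exceptional type, and Serre's effective Chebotarev argument forces $\ell$ below an explicit function of $m$.

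After (a)--(c) only finitely many pairs $(\ell,m)$ remain. For each, I would compute $S^{\new 2,3}_{\ell-3}(6m, \ep_{2,r,\psi}, \ep_{3,r,\psi})$ directly in Sage, enumerate its newforms, and determine the image of $\bar\rho_f$ for each via the standard algorithm, using Hecke eigenvalues at sufficiently many primes together with the fixed determinant $\det\bar\rho_f = \omega^{\ell-4}\psi$. The pairs listed in the lemma are precisely those for which the computation produces a newform whose image is strictly smaller than $\SL_2(\F_\ell)$; for every other remaining pair the computation confirms suitability.

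The main obstacle is calibrating the three exclusion arguments so that the resulting bounds $\ell>B(m)$ are small enough to keep the residual computation tractable for every $m$ allowed by Theorem~\ref{thm:Flconstruct}. The dihedral case (b) is the most delicate, since the Nebentypus $\psi=(\tfrac{\bullet}{m})$ is itself a quadratic character, and one must carefully distinguish genuine inner twists by $\chi$ from the Nebentypus; this is the same difficulty addressed in \cite[Prop.~3.3]{Ahlgren-Allen-Tang} and should be handled similarly here.
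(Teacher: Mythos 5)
Your overall strategy (Dickson classification, rule out reducible/dihedral/exceptional images for all but a bounded set of pairs, then compute the rest) is in the right spirit, but as written it has gaps that would prevent it from yielding the lemma. The paper's proof is much more rigid and, crucially, uniform in $m$: it simply invokes \cite[Prop.\ 7.2]{AAD} with $k=\ell-3$, which reduces suitability to three elementary conditions depending only on $\ell$ --- the congruence $2^{\ell-4}\not\equiv 2^{\pm1}\pmod{\ell}$ (reducible case, exploiting newness at $2$ with the prescribed Atkin--Lehner sign), the weight condition $\ell-3\neq\frac{\ell+1}{2},\frac{\ell+3}{2}$ (dihedral case), and $\frac{\ell\pm1}{(\ell\pm1,\ell-4)}\geq 6$ (exceptional case, via the order of the image of tame inertia at $\ell$) --- and then checks these by hand, so the only $\ell$ not covered are $7,13,19$, and the listed $m$ are just the admissible companions under \eqref{eq:lmcond}/\eqref{eq:smalllmcond} once pairs with $\ell,m\leq 11$ have been discarded. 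Your versions of these exclusions are not uniform in $m$: you propose bounds of the shape ``$\ell$ exceeds an explicit bound in $m$'' (reducible), an inner-twist-plus-dimension argument that ``yields a contradiction once $\ell\bar{\ell}>m^2$ is sufficiently large'' (dihedral, with no actual mechanism given), and ``Serre's effective Chebotarev'' for the exceptional case. Since $m$ ranges over \emph{all} primes $\geq 5$, bounds of the form $\ell\leq B(m)$ leave infinitely many pairs to check unless you prove $B(m)=o(m^2)$ so that eventually no $\ell$ satisfies both $\ell\leq B(m)$ and $\ell\bar{\ell}>m^2$; you never establish this, and effective Chebotarev in particular gives bounds far too weak (and is the wrong tool --- the standard and correct argument for the exceptional case is local at $\ell$: the projective image contains an element of order $\geq 6$ coming from tame inertia, incompatible with $A_4$, $S_4$, $A_5$). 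The dihedral exclusion likewise should come from the local-at-$\ell$ analysis forcing the quadratic field and constraining the weight, not from counting inert-prime vanishing against the dimension of the new subspace.

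There is also a misreading of what the lemma asserts. The listed pairs are \emph{possible} exceptions, i.e.\ pairs not covered by the general criterion; the paper goes on (after the lemma, using LMFDB and Magma computations on the specific newforms in $S^{\new 2,3}_{\ell-3}\(6m,-\ptfrac{8}{\ell},-\ptfrac{12}{\ell}\)$) to show that suitability in fact holds for every one of them. Your closing claim that the listed pairs are ``precisely those for which the computation produces a newform whose image is strictly smaller than $\SL_2(\F_\ell)$'' is therefore false, and if your residual computation really produced such newforms the main theorems would fail for those pairs. To prove the lemma you only need suitability off the list, and for that you need exclusion criteria that, like the paper's, depend on $\ell$ (and the weight) alone rather than on calibrated bounds in $m$.
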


\begin{proof}
    By \cite[Prop 7.2]{AAD}  (with $k=\ell-3$) we see that \eqref{eq:suitcheck} holds if all of the following conditions are satisfied:

   \begin{gather}
    2^{\ell-4} \not \equiv 2^{\pm 1}\pmod\ell\label{eq:irreducible}\\
\ell-3 \neq \frac{\ell+1}2, \frac{\ell+3}2
\label{eq:ruleoutdihedral}\\
\frac{\ell+1}{(\ell+1,\ell-4)}, \frac{\ell-1}{(\ell-1,\ell-4)} \geq 6.
\label{eq:ruleoutexceptional}
   \end{gather}
It is straightforward to check that the first condition holds when $\ell\geq 7$, that the second holds when $\ell\geq 11$, and that the third holds when $\ell=11, 17$ and when $\ell\geq 23$.  This proves the lemma.
\end{proof}

To prove the theorem it remains to   show that \eqref{eq:suitcheck} holds for each pair $(\ell, m)$ in Lemma~\ref{lem:suit}.
Using \eqref{eq:epdef} we see that the spaces of interest   are 
$S^{\new 2,3}_{\ell-3}
\(6m,-\pfrac{8}{\ell},  -\pfrac{12}{\ell}\)$.
For each such pair let $f=\sum a(n)q^n\in S^{\operatorname{new} 2,3}_{\ell-3}
\(6m,-\pfrac{8}{\ell},  -\pfrac{12}{\ell}\)$ be a newform (necessarily of level $6$ or $6m$). It follows from \cite[Theorem $2.47(b)$]{DDT} that there are four possibilities for the image of $\bar{\rho}_f$:
\begin{enumerate}
\item
$\bar{\rho}_{f}$ is reducible.
\item
$\bar{\rho}_{f}$ is dihedral, i.e. $\bar{\rho}_{f}$ is irreducible but $\bar{\rho}_{f} \sl_{G_{K}}$ is reducible for some quadratic $K/\Q$.
\item
$\bar{\rho}_{f}$ is exceptional, i.e. the projective image of $\bar{\rho}_{f}$ is conjugate to one of $A_{4}$, $S_{4}$, or $A_{5}$.
\item
The image of $\bar{\rho}_{f}$ contains a conjugate of $\SL_2(\mathbb{F}_\ell )$.
\end{enumerate}
To rule out the first  possibility, we note that 
\eqref{eq:irreducible}  
holds for all of these pairs $(\ell, m)$. It follows from 
   \cite[Lemma $3.2$]{Ahlgren-Allen-Tang}   that $\bar{\rho}_f$ is not reducible.

By the same lemma, 
we need to rule out the second possibility only for pairs $(\ell, m)$ which do not satisfy \eqref{eq:ruleoutdihedral}; this reduces us to the single pair $(7, 13)$.
There are only two newforms  in the space 
$S^{\new 2,3}_{4}(78,-1,1)$. If $\bar{\rho}_{f}$ were dihedral for one of these $f=\sum a(n)q^n$, then we would have 
$\bar{\rho}_f \cong \bar{\rho}_f \otimes \omega^{3}$.  This would imply that $a(Q)\equiv 0\pmod 7$ for any prime $Q$ with $\pfrac Q7=-1$.  An examination of 
\cite{lmfdb} shows that for each of these forms $f$ (which have labels 78.4.a.d and 78.4.a.e)  we have $a(5)\not\equiv 0\pmod 7$.  Thus, the second possibility does not arise.

It remains to  rule out the third possibility.
Let $p \nmid 6\ell m$ be  prime and $u_f(p):=a(p)^2/p^{\ell-4}$.
If  $\bar{\rho}_f$ were exceptional, then we would have
 \begin{equation}\label{eq:ribet}
\bar{u_f(p)}=0,1,2,4 \ \ \text{ or } \ \bar{u_f(p)}^2-3\bar{u_f(p)}+1=0\ \ \ \text{in $\bar\F_\ell$,}
\end{equation}
depending on the order of the image of $\bar{\rho}_f(\operatorname{Frob}_p)$ in $\operatorname{PGL}_2(\bar{\mathbb{F}}_\ell)$ (see \cite[p. $264$]{Ribet1} and \cite[p.~189]{Ribet2}).
We compute in Magma to show  for each pair $(\ell, m)$
in Proposition~\ref{lem:suit}  
and for each newform
$f =\sum a(n)q^n\in S^{\operatorname{new} 2,3}_{\ell-3}
\(6m,-\pfrac{8}{\ell},  -\pfrac{12}{\ell}\)$ 
that there exists 
$p \in \{5,7,11,17,23\}$ such that \eqref{eq:ribet} does not hold for $u_f(p)$.  It follows that none of the $\bar\rho_f$ are exceptional.
\end{proof}

We now provide a detailed description  of our computations when $\ell=19$ and $m\in \{5,13\}$.
\begin{example}[The case $\ell=19$, $m\in \{5, 13\}$]

Here we need to consider   newforms
 $f\in S^{\operatorname{new}}_{16}(6m,1,1)$. 
There is one newform $f_0\in S^{\operatorname{new}}_{16}(6,1,1)$; it has integer coefficients 
and its reduction mod~$19$ is 
\begin{equation*}
q + 5q^2 + 17q^3 + 6q^4 + 17q^5+\cdots.
\end{equation*}
We find that $\bar{u_{f_0}(11)}=5$ and $\bar{u_{f_0}(11)}^2-3\bar{u_{f_0}(11)}+1=11$; it follows that $\bar{\rho}_{f_0}$ is not 
exceptional.

Suppose that $m=5$.  There are $8$ Galois orbits of newforms in $S^{\operatorname{new}}_{16}(30)$.
There are two  newforms $f_1, f_2\in S^{\operatorname{new}}_{16}(30,1,1)$, which can be  identified by 
\[\begin{aligned}
f_1&=q - 128q^2 - 2187q^3 + 16384q^4 + 78125q^5+\cdots,\\
f_2&=q - 128q^2 - 2187q^3 + 16384q^4 - 78125q^5 +\cdots.
\end{aligned}\]
The smallest witnesses  $p$ to the fact that the Galois representations are not exceptional are below.

\[
\renewcommand{\arraystretch}{1.5}
\begin{tabular}{|c|c|c|c|c}
\hline
    $f$ & $p$ &   $\bar{u_f(p)}$  & $\bar{u_f(p)}^2-3\bar{u_f(p)}+1$  \\ \hline \hline
    $f_1$ & $23$ & $7$ & $10$   \\ \hline
    $f_2$ & $11$ & $5$ & $11$   \\ \hline
      \end{tabular}
\]

In the case when $m=13$ there are $8$ Galois orbits of newforms in $S^{\operatorname{new}}_{16}\(78\)$.
Two of these orbits are contained in $S^{\operatorname{new}}_{16}\(78,1,1\)$, and
representatives of these orbits can be identified by 
\[\begin{aligned}
&q - 128q^2 - 2187q^3 + 16384q^4 + aq^5+\cdots,\\
& q- 128q^2 - 2187q^3 + 16384q^4 + bq^5 +\cdots, 
\end{aligned}\]
where 
\[\begin{gathered}
a^3 - 150844a^2 - 14396407620a +  2064376814439600=0,\\
b^4 - 169580b^3 - 64062939000b^2 +
    4905613268788000b + 874069880070911440000=0.
\end{gathered}\]
Since $19$ remains prime in $\Q(a)$, there  are three mod~$19$ reductions of forms in the first orbit.
These have coefficients  in $\F_{19^3}$ and can be identified as
\begin{equation*}
f_1=q + 5q^2 + 17q^3 + 6q^4 + x^{3508}q^5+\cdots  \qquad\text{where $\F_{19^3}^*=\langle x\rangle$},
\end{equation*}
and its two images under iterations of the  Frobenius $x\mapsto x^{19}$.

On the other hand, $19$ splits completely in $\Q(b)$, so the second orbit gives rise to four mod~${19}$ reductions
with coefficients in $\F_{19}$ which can be identified by 

\[\begin{aligned}
f_2&=q + 5q^2 + 17q^3 + 6q^4 +  16q^5+\cdots,\\
f_3&=q + 5q^2 + 17q^3 + 6q^4 +  13q^5+\cdots,\\
f_4&=q + 5q^2 + 17q^3 + 6q^4 +  10q^5+\cdots,\\
f_5&=q + 5q^2 + 17q^3 + 6q^4 +  4q^5+\cdots.\\
\end{aligned}\]
The smallest $p$ which witness the fact that the representations are not exceptional are below.

\[
\renewcommand{\arraystretch}{1.5}
    \begin{tabular}{|c|c|c|c|c}
\hline
    $f$ & $p$ &   $\bar{u_f(p)}$  & $\bar{u_f(p)}^2-3\bar{u_f(p)}+1$  \\ \hline \hline
    $f_1$ & $5$ & $x^{4730}$ & $x^{1158}$   \\ \hline
    $f_2$ & $7$ & $11$ & $13$   \\ \hline
     $f_3$ & $7$ & $7$ & $10$   \\ \hline
     $f_4$ & $5$ & $17$ & $11$   \\ \hline
     $f_5$ & $5$ & $5$ & $11$   \\ \hline     
      \end{tabular}
\]

\end{example}

\begin{proof}[Proof of Theorem $1.2$] 
  This result  follows directly  from an application of \cite[Theorem $(1.4)$]{AAD} to the form $F_\ell$ in \eqref{eq:flrecall}.
\end{proof}

\bibliographystyle{plain}
\bibliography{congruences-frobenius-partitions}
\end{document}